\documentclass{colorart}
\usepackage{ProjLib}
\usepackage[numbers,sort&compress]{natbib}

\usepackage{xspace,enumitem}
\usepackage{graphicx}
\usepackage{subcaption}
\usepackage{nlatexmac}
\usepackage{doi,bm}

\numberwithin{equation}{section}

\date{\today}

\newcommand{\vun}{\mathbb{1}}
\newcommand{\ee}{\epsilon}

\usepackage{scalerel}[2014/03/10]
\usepackage{stackengine}

\newcommand{\rdpp}{\R^d_{++}}
\newcommand{\rdp}{\R^d_{+}}
\newcommand{\rdpz}{\R^d_+\setminus\ens{0}}

\newcommand{\sdp}{S^{d-1}_+}

\newcommand{\Rdd}{\R^{d\times d}}
\DeclareMathOperator*{\esssup}{ess\,sup}
\UseLanguage{English}
\begin{document}
\UseLanguage{EN}

\title{Differentiability of the Leading Lyapunov Exponent \\
  of a linear differential equation with random coefficients \\
Application to the Calculation of the Selection Gradient in Random Environments}
\author{Philippe Carmona}

\address{Laboratoire de Math\'ematiques Jean Leray UMR 6629\\
Université de Nantes, 2 Rue de la Houssini\`ere\\
BP 92208, F-44322 Nantes Cedex 03, France}

\maketitle 

\begin{abstract}
    The study of evolution in temporally fluctuating environments often relies on the analysis of Lyapunov exponents, which quantify the exponential growth of populations. However, when model parameters depend on a stochastic process, calculating the selection gradient—a key tool for predicting the evolution of phenotypic traits—becomes a mathematical challenge. While the periodic case has been resolved, a general approach for random environments remains to be developed.

    This article proposes a rigorous method to:
    \begin{enumerate}
        \item Establish the differentiability of the leading Lyapunov exponent $\Lambda(z)$ with respect to a parameter $z$, providing an explicit integral formula for its derivative $\Lambda'(z)$.
        \item Develop a numerical algorithm to compute $\Lambda'(z)$ by solving an extended differential equation.
        \item Apply these results to the analysis of mutant invasion in a resident population at equilibrium, identifying the selection gradient as the derivative $\Lambda'(z)$.
    \end{enumerate}

    \footnote{\today}
\end{abstract}

\section{Notations and Introduction}
Let $d \geq 1$ be an integer. We denote
\begin{align*}
  \rdp &= \ens{x \in \R^d : \forall i, x_i \geq 0} \\
  \rdpp &= \ens{x \in \R^d : \forall i, x_i > 0} \\
  \sdp &= \ens{x \in \rdp: \sum_i x_i = 1}.
\end{align*}
For $x \in \R^d$, we denote $\valabs{x} = \sum_i \valabs{x_i}$, so that if $x \in \rdp$, we have $\valabs{x} = \sum_i x_i = \crochet{x, \vun}$ with $\vun = (1, 1, \ldots, 1)^T$. We denote by $\Mrond$ the set of \emph{Metzler} matrices (also called cooperative matrices): these are the real $d \times d$ matrices whose off-diagonal coefficients are non-negative. A matrix $M \in \Mrond$ is called \emph{irreducible} if for all $1 \leq i, j \leq d$, there exists $n \geq 2$ and $i_1 = i, i_2, \ldots, i_n = j$ such that for all $k \in \ens{1, \ldots, n-1}$, $M_{i_k i_{k+1}} > 0$ (equivalently, if all coefficients of $e^M$ are strictly positive).

Let $(\omega_t)_{t \geq 0}$ be a Feller process with values in the compact metric space $S$, uniquely ergodic, meaning it has a unique invariant probability $\mu$. Given a continuous function $A: S \to \Mrond$, we consider the differential equation
\begin{equation}\label{eq:edolin}
  \frac{dy}{dt} = A(\omega_t) y.
\end{equation}
If, moreover, the average matrix $\bar{A} = \int A(s) \, d\mu(s)$ is irreducible, \citet{Benaim2023} proved the existence of a real number $\Lambda$, called the \emph{leading Lyapunov exponent}, such that for any initial condition $y \in \rdpz$ of equation \eqref{eq:edolin}, we have almost surely
\begin{equation}
  \lim_{t \to +\infty} \frac{1}{t} \log(\valabs{y(t)}) = \Lambda.
\end{equation}

We will consider a function $A(z, s)$ where $A: I \times S \to \Mrond$ is continuous, $C^1$ in the variable $z$ on the open interval $I$, and we denote $\Lambda(z)$ as the leading Lyapunov exponent of the function $s \to A(z, s)$, when it exists. The goal of our study is:
\begin{enumerate}
\item To give  sufficient conditions  for the function $\Lambda(z)$ to be continuously differentiable on $I$, providing an integral formula for the derivative $\Lambda'(z)$.
  \item To establish a limit formula that allows the numerical
  computation of $\Lambda'(z)$ by solving a differential equation that
  is a $2d$-dimensional extension of \eqref{eq:edolin}. We shall need
  to assume that the linear differential equation with random
  coefficients \eqref{eq:edolin} in \emph{integrally separated}.
  \item To apply these results  to the study of invasion in a population model in a random environment, identifying the \emph{selection gradient} whose sign indicates the direction of selection.
  \item To show that  this result   generalizes the results established in periodic environments by \citet{LioGan2022}.
\end{enumerate}

The problem of determining the differentiability with respect to
parameters of the Lyapunov exponent is an old problem, treated in a
very general and elegant way for the case of IID products of random
matrices (see
\citet{Hennion91,Lepage89,Ledrappier82,Peres92}) or more
general products of random matrices( see
\citet{Ruelle79,LudGunVol94}). The main interest of our study is to
treat the case of a linear differential equation with random
coefficients, and to provide an explicit numerical algorithm for
computing the derivative, which, to our knowledge, is new.

\bigskip

The first result of this paper, Theorem \ref{thm:main}, derives from
standard results on Lyapunov exponent, and points 1) and 2) are direct
consequences of 
results on Lyapunov exponents due to \citet{MierShen13}, while point
3) is derived from results established by \citet{LudGunVol94} for
products of random matrices.

Unfortunately, 
the proof of Theorem~\ref{thm:an} that provides a numerical
approximation of the derivative of the Lyapunov exponent relies
heavily on an integral  separation assumption of the random dynamical
system generated by
\begin{equation}
  \label{eq:rdscong}
  \frac{dx}{dt} = M(\theta_t \omega) x
\end{equation}
with $M(\omega)=A(z,\omega_0)$.
 Even if \citet{CongSon2016}
established that the set $\Rrond$ of integrally separated systems
 is open and
dense in the set of bounded linear random differential equations, when
the dynamical system $(\Omega,\Frond,\PP, \theta_t)$ is not of circle
type (see Remark 1, section 3  of  \citet{CongSon2016}), we
are unable to provide an assumption on the matrix function $A(z,s)$
that ensures  that for all $z$, $M(\omega)=A(z,\omega_0) $ is in
$\Rrond$.

\section{A first result on differentiability}

Let $(\Omega,\Frond,\PP)$ be a probability space, and  $(\omega_t)_{t
  \geq 0}$ be a Feller process with values in the compact metric space
$S$, uniquely ergodic, meaning it has a unique invariant probability
$\mu$.
Since $(\omega_t)_{t\ge 0}$ is Feller continuous, without loss in generality we can assume that $\Omega$ is the space of
càdlàg functions $\omega: \R_+\to S$ equiped with the Skorokhod
topology and Borel sigma field (see e.g. \cite[Theorem 19.15]{Kal2021}
), on which we consider the time shift
\begin{equation*}
  \theta_t \omega (s) = \omega(t+s) \qquad(t,s\ge 0)\,.
\end{equation*}
Hence $(\Omega,\Frond,\PP)$ is a standard/Lebesgue probability space.

We are given a continuous function $A: I \times S \to \Mrond$ with $I = [a, b]$, $a < b$, and $S$ a compact metric space such that for all $s \in S$, $z \to A(z, s)$ is continuously differentiable with derivative $\partial_z A(z, s)$. We denote by $(A_{ij}(s, z))_{1 \leq i, j \leq d}$ the coefficients and we consider the random variable
\begin{equation}
  \Gamma_z(\omega)= \inf_{t\in[0,1]} m_z(\omega_t)\,,\quad\text{with } m_z(s) = \inf_{i \neq j} A_{ij}(z, s)\,.
\end{equation}

\begin{hypothesis}\label{hyp:un}
  \begin{enumerate}
    \item For all $z$, the average matrix $\bar{A}(z) = \int A(z, s) \, d\mu(s)$ is irreducible.
    \item The process $(\omega_t)_{t \geq 0}$ is right-continuous with left limits, stationary, and ergodic with invariant law $\mu$.
    \item For all $z$, 
    $\esp{\unsur{\Gamma_z}} < +\infty$.
  \end{enumerate}
\end{hypothesis}
We can now state our main result. The first part, items 1) and 2), is
already  proved in \citet[Theorem4.1]{MierShen13} (see also
\citet[Theorem 3.1]{LudGunVol94} for a discrete time analog). 
\begin{theorem}\label{thm:main}
  Under Hypothesis  \ref{hyp:un},
  for all $z$, there exists a measurable
set $\tilde{\Omega}_z$ such that $\prob{\tilde{\Omega}_z} = 1$, a real
number $\Lambda(z)$
and two random variables $U_z, V_z$ with values in $\rdpp$, such that for all $\omega \in \tilde{\Omega}_z$:
\begin{enumerate}
  \item $1 = \valabs{U_z(\omega)} = \crochet{V_z(\omega), U_z(\omega)}$.
  \item $\Lambda(z)$ is the leading Lyapunov exponent of the differential equation
  \begin{equation}\label{eq:edoz}
    \frac{dy}{dt} = A(z, \omega_t) y(t),
  \end{equation}
  and we have the integral formula
  \begin{equation}
    \Lambda(z) = \esp{\crochet{A(z, \omega_0) U_z, \vun}}.
  \end{equation}

    \item The function $z \to \Lambda(z)$ is differentiable almost
    everywhere on $(a, b)$ with derivative
  \begin{equation}\label{eq:devlyapuesp}
    \Lambda'(z) = \esp{\crochet{\partial_z A(z, \omega_0) U_z, V_z}}.
  \end{equation}
\end{enumerate}
\end{theorem}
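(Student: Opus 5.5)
Items 1) and 2) I would take essentially from \citet[Theorem 4.1]{MierShen13}. I would only check that Hypothesis~\ref{hyp:un} gives their setting: a measurable linear skew-product flow on $\rdp$ over the ergodic shift $(\Omega,\Frond,\PP,\theta_t)$, generated by $M(\omega)=A(z,\omega_0)$, which is bounded because $A$ is continuous on the compact $I\times S$. Item 3 of the hypothesis supplies the focusing/integrability condition they need. Indeed, on $[0,1]$ the off-diagonal entries are bounded below by $\Gamma_z>0$, so the time-one cocycle $\Phi_z(1,\omega)$ has all entries at least $c\,\Gamma_z(\omega)^{d}$ for some constant $c$. Hence $\log^-$ of its minimal entry is integrable, and $\esp{\unsur{\Gamma_z}}<\infty$ implies $\esp{\log^+\unsur{\Gamma_z}}<\infty$. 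With this, their theorem gives the one-dimensional Oseledets splitting into a positive direction $U_z(\omega)\in\rdpp$ (normalized by $\valabs{U_z}=1$) and an adjoint positive direction $V_z$, normalized by $\crochet{V_z,U_z}=1$. These satisfy the equivariance relations
\[
\Phi_z(t,\omega)U_z(\omega)=e^{\int_0^t r_z(\theta_s\omega)ds}U_z(\theta_t\omega),
\qquad
\Phi_z(t,\omega)^T V_z(\theta_t\omega)=e^{\int_0^t r_z(\theta_s\omega)ds}V_z(\omega).
\]
Differentiating $\valabs{\Phi_z(t,\omega)U_z}$ at $t=0$ gives $r_z(\omega)=\crochet{A(z,\omega_0)U_z,\vun}$. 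Birkhoff's theorem then yields $\Lambda(z)=\esp{r_z}$.

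For item 3 I would follow the strategy of \citet[Theorem 3.1]{LudGunVol94}: first prove a one-sided or local derivative bound, then invoke a monotonicity or Lipschitz argument to get a.e.\ differentiability. The concrete plan is as follows. Fix $z,z'$ and a time $T$. Using the duality $\crochet{V_z,U_z}=1$, one has a.s.
\[
\Lambda(z')=\lim_T \tfrac1T\log\crochet{V_{z}(\theta_T\omega),\Phi_{z'}(T,\omega)U_{z}(\omega)}.
\]
The positivity of the vectors and the integrability bound ensure that pairing with any positive vectors does not change the exponent. By variation of constants,
\[
\Phi_{z'}(T)-\Phi_z(T)=\int_0^T\Phi_{z'}(T,s)\,(A(z',\omega_s)-A(z,\omega_s))\,\Phi_z(s)\,ds.
\]
This gives $\Lambda(z')-\Lambda(z)=(z'-z)\,\esp{\crochet{\partial_zA\,U_z,V_z}}+o(z'-z)$, provided the projective convergence under $\Phi_{z'}$ toward $U_{z'}$ is uniform enough in $z'$. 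Here the Birkhoff contraction of the Hilbert projective metric under strictly positive matrices is the tool.

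A simpler route to the weaker a.e.\ statement is to show that $\Lambda$ is Lipschitz on $I$ and then apply Rademacher. Lipschitz continuity follows from $\valabs{\Lambda(z')-\Lambda(z)}\le\sup\norme{A(z')-A(z)}$, which is obtained by comparing the two equations via Gronwall. Then at points of differentiability I would identify the derivative. For that, set $f(z')=\lim_T\frac1T\log\crochet{V_z(\theta_T\omega),\Phi_{z'}(T,\omega)U_z(\omega)}$. I would compare it with a convexity-type (Jensen) inequality, $\Lambda(z')\ge\Lambda(z)+(z'-z)\,\esp{\crochet{\partial_zA\,U_z,V_z}}+o(z'-z)$. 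This comes from writing $\log\crochet{V,\Phi_{z'}U}-\log\crochet{V,\Phi_zU}$ as a telescoping sum over unit intervals and using concavity of $\log$ with respect to the probability weights determined by $U_z,V_z$ along the orbit, i.e.\ the Gibbs-measure/tangent functional argument of \citet{LudGunVol94}. A one-sided supporting-line inequality at every $z$, together with existence of $\Lambda'(z)$ a.e., forces $\Lambda'(z)$ to equal the slope of that line on both sides.

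The main obstacle I expect is this last step: justifying the supporting-line inequality rigorously. It requires controlling the second-order remainder uniformly in $T$. I would handle it via the projective contraction coefficient $\tau(\Phi_z(1,\theta_k\omega))<1$, whose logarithm is controlled by $\Gamma_z$ and is therefore integrable. This makes the correlations along the orbit decay fast enough that the $o(z'-z)$ term survives division by $T$.
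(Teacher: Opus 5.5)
Your items 1) and 2) follow the paper. You verify the hypotheses of \citet{MierShen13}: boundedness by compactness, and (O3) from $\esp{1/\Gamma_z}<\infty$ via a lower bound on the entries of $\Phi_z(1,\omega)$. Then you differentiate $q(t,\omega)=\valabs{\Phi(t,\omega)U(\omega)}$ and apply Birkhoff.

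The gap is in item 3, and neither of your routes closes it. The first route is explicitly conditional on a projective convergence that is uniform in $z'$, which you do not prove. That uniformity amounts to interchanging the limits $T\to\infty$ and $z'\to z$, which is the whole difficulty. The second route rests on $\valabs{\Lambda(z')-\Lambda(z)}\le\sup\norme{A(z')-A(z)}$, and this is false. Take $S$ a single point and $A(z)=\begin{pmatrix}0&1\\ z&0\end{pmatrix}$ on $I=[0.01,0.04]$. Hypothesis~\ref{hyp:un} holds with $\Gamma_z=z$, and $\Lambda(z)=\sqrt z$. Then $\Lambda(0.04)-\Lambda(0.01)=0.1>0.03=\norme{A(0.04)-A(0.01)}$. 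The best constant on $[a,b]$ is $1/(2\sqrt a)$, which is unbounded as $a\downarrow 0$ while $\sup\norme{A}$ stays bounded. So no estimate $C\sup\norme{A(z')-A(z)}$, with $C$ depending only on bounds for $A$, can hold. Gronwall applied to the variation-of-constants formula only gives a constant that bounds $\norme{\Phi_z(t-s,\theta_s\omega)}e^{-\Lambda(z)(t-s)}$ uniformly. In the example that is the conditioning of the eigenbasis. In the random setting it is a random, non-uniform quantity that degenerates where $\Gamma_z$ is small. So Rademacher cannot be invoked as planned, and a.e. differentiability is not established.

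Your supporting-line step, by contrast, is sound and simpler than you fear. At integer times, use the path measure built from $U_z$, $\Phi_z(1,\cdot)$ and $V_z$. Jensen then gives an \emph{exact} inequality for each $n$, and after Birkhoff
\[
\Lambda(z')\ge\Lambda(z)+\esp{\sum_{i,j}\pi_{ij}\bigl(\log\Phi_{z'}(1,\omega)_{ji}-\log\Phi_z(1,\omega)_{ji}\bigr)},\qquad \pi_{ij}=\frac{U_{z,i}(\omega)\,\Phi_z(1,\omega)_{ji}\,V_{z,j}(\theta\omega)}{q(1,\omega)}.
\]
There is no remainder in $T$, so no projective-contraction estimate is needed. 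The real technical point is dominating the difference quotients of $\log\Phi_{z'}(1,\omega)_{ij}$ as $z'\to z$.

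Using this inequality with both base points $z$ and $z'$ gives the correct replacement for your Lipschitz bound:
\[
\valabs{\Lambda(z')-\Lambda(z)}\le\esp{\max_{i,j}\valabs{\log\Phi_{z'}(1,\omega)_{ij}-\log\Phi_z(1,\omega)_{ij}}}.
\]
This is $O(\valabs{z'-z})$ under a locally uniform version of item 3 of Hypothesis~\ref{hyp:un}, for example $\esp{\sup_{z\in J}\Gamma_z^{-1}}<\infty$ on compact $J\subset(a,b)$. The paper's chain-rule step implicitly relies on the same kind of domination.

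You would still have to turn the time-one slope $\esp{\crochet{\partial_z\Phi(z,1,\omega)U_z,V_z(\theta\omega)}/q(1,\omega)}$ into $\esp{\crochet{\partial_zA(z,\omega_0)U_z,V_z}}$. The paper does this with four ingredients: the variation-of-constants formula for $\partial_z\Phi$, the equivariance of $U_z,V_z$, the identity $q(1,\omega)=q(s,\omega)q(1-s,\theta_s\omega)$, and stationarity.

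For comparison, the paper never uses Rademacher. It applies the discrete-time pressure formalism of \citet{LudGunVol94} to $\Phi(z,1,\omega)$. The pressure is Gateaux differentiable, with the equilibrium measure as derivative, and your Jensen inequality is exactly its subgradient inequality. Combined with the chain rule, this yields the derivative at each $z$ rather than only almost everywhere, at the price of importing that machinery.
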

\begin{remark}
  When $A(z,s)=A(z) \in\Mrond$  is
  irreducible, then formula \eqref{eq:devlyapuesp} reduce to
  \begin{equation*}
    \Lambda'(z)= \crochet{A'(z) U_z, V_z}
  \end{equation*}
  and is easy to prove directly (for example  by  a perturbation approach).
\end{remark}

\section{A collection of results for products of random matrices}
\label{sec:tempsdiscret}
We collect results from sections 3 and 4 of \citet{LudGunVol94}, and
apply them to get an integral formula for the derivative of the top
Lyapunov exponent.

Let $\Mrond_+$ be the set of $d\times d$ matrices with positive
coefficients, let $(\Omega,\Frond,\P,\theta)$ be a dynamical system,
and let $A:\Omega \to \Mrond_+$ measurable such that $\log^+ M_A,
\log^+ (\unsur{m_A}) \in L^1(\PP)$ with
\begin{equation}
  m_A(\omega) = \min_{1\le i,j\le d} A_{ij}(\omega)\,,\quad
  M_A(\omega) = \max_{1\le i,j\le d} A_{ij}(\omega)\,
\end{equation}
We consider the cocycle
\begin{equation}
  \phi_n(\omega) = A(\theta^{n-1}\omega) \cdots A(\omega)\,.
\end{equation}
By Theorem 3.1 of  \citet{LudGunVol94} there exists a unique positive unit
random vector $U: \Omega \to \rdpp$, $\crochet{U,\vun}=1$, and a positive
random scalar $q:\Omega\to(0,\infty)$, with $q\in L^1(\PP)$ such that almost surely
\begin{equation}
  \label{eq:known1}
  A(\omega) = q(\omega) U(\theta \omega)\,,
\end{equation}
and for all $x\in \rdpp$,
\begin{equation}
  \lim_{n\to +\infty} \unsur{n} \log \valabs{\phi_n(\omega)x} =
  \esp{\log q} =: \lambda(A)\,.
\end{equation}

By Proposition 3.8 of \citet{LudGunVol94}, there exists a unique
positive unit random vector $U^*:\Omega\to \rdpp$,
$\crochet{U^*,\vun}=1$, such that almost surely
\begin{equation}
  \label{eq:known2}
  A^*(\omega) U^*(\theta\omega) = q^*(\omega) U^*(\omega)\,,
\end{equation}
with $q^*(\omega) = \valabs{ A^*(\omega) U^*(\theta\omega)} \in
L^1(\PP)$ such that
\begin{equation}
\frac{q^*(\omega)}{q(\omega)}=\frac{\crochet{U(\theta\omega),U^*(\theta\omega)}}{\crochet{U(\omega),U^*(\omega)}}\,.
\end{equation}

\medskip
Let $X=\ens{1,\ldots,d}^\N$, $\sigma$ be the shift on $X$, $\Brond$
the Borel sigma field on $X$. Let $C(X)$ be the space of real valued
continuous functions on $X$ equipped with the sup norm
$\norme{.}_{\infty}$ and $L^1(\Omega,C(x))$ the space of integrable
random continuous functions with the norm
\begin{equation}
  \norme{f}_1 = \int \norme{f(\omega)}_\infty \, d\PP(\omega)\,.
\end{equation}
By Proposition 4.5 and 4.8 of  \citet{LudGunVol94}, we can define the
\emph{pressure} function $\pi_\sigma$ on $L^1(\Omega,C(x))$, which
satisfies
\begin{equation}
  \lambda(A) = \pi_\sigma(\varphi_A)\,,\quad \text{with}\quad
  \varphi_A(\omega)x = \log A_{x_1x_0}(\omega)\,.
\end{equation}
Moreover, the function $\pi_\sigma$ is Gateaux differentiable at
$\varphi_A$,
\begin{equation}
  \frac{d}{dt}_{\mid t=0}(\varphi_A + t g) = \int_{X\times \Omega} g \,
  d\mu_A \quad (g \in L^1(\Omega,C(X))\,,
\end{equation}
with $\mu_A$ the \emph{equilibrium measure} associated to $\varphi_A$,
that is the unique probability measure on $(X\times
\Omega,\Brond\otimes \Frond)$ such that 
\begin{equation}
  \mu(dx,d\omega)= d\mu_\omega(x) d\PP(\omega)\,,
\end{equation}
with
\begin{gather}
  \mu_\omega(x_0=y_0,\ldots,x_n=y_n) = p_{y_0}(\omega) \prod_{j=1}^n
  p_{y_{j-1}y_j}(\theta^{j-1}\omega)\,,\\
  p_{ij}(\omega) = \frac{A_{ji}(\omega) V_j(\theta\omega)}{q^*(\omega)
    V_i(\omega)}\,,\quad p_i(\omega) = \frac{U_i(\omega)
    V_i(\omega)}{\crochet{U(\omega),V(\omega)}}\,.
\end{gather}

\bigskip
Let $A:(a,b)\times \Omega \to \Mrond_+$ be measurable such that for
any $z\in(a,b)$, $\log^+M_{A(z,.)},\log^+(\unsur{m_{A(z,.)}}) \in
L^1(\PP)$. We can consider the top Lyapunov exponent
\begin{equation}
  \Lambda(z) = \lambda(A(z,.)) = \pi_\sigma(\varphi_{A(z,.)})\,.
\end{equation}

Assume that for all $\omega$, the function $z\to A(z,\omega)$ is
differentiable at $z_0 \in(a,b)$, with derivative $\partial_z
A(z_0,\omega)$ such that
\begin{equation}\label{eq:intassumption}
  T:(\omega,x) \to \frac{\partial_z A(z_0,\omega)_{x_1 x_0}}{A_{x_1
      x_0}(\omega)} \in L^1(\Omega,C(X))\,.
\end{equation}
Then, by the chain rule, $\Lambda(z)$ is differentiable at $z_0$ with
derivative
\begin{equation}
  \Lambda'(z_0) = \int T(\omega,x) d\mu_{A(z_0,.)}(\omega,x).
\end{equation}
We let $U(\omega)=U_{z_0}(\omega)$ and $U^*(\omega)=U^*_{z_0}(\omega)$ be
the random unit vectors associated to $A(z_0,\omega)$ and
$A^*(z_0,\omega)$ in equations \eqref{eq:known1}, \eqref{eq:known2} ;  we can
expand the derivative  formula to obtain
\begin{align}
  \Lambda'(z_0)&=\esp{\sum_{i,j} p_i(\omega) p_{ij}(\omega)
    \frac{\partial_z A(z_0,\omega)_{ji}}{A_{ji}(\omega)}} \notag\\
    &=\esp{\unsur{q^*(\omega) \crochet{U(\omega),U^*(\omega)}}
        \sum_{i,j} U_i(\omega) U^*_j(\theta\omega) \partial_z        A(z_0,\omega)_{ji}}\notag \\
      &=\esp{\unsur{q^*(\omega) \crochet{U(\omega),U^*(\omega)}}
          \crochet{\partial_zA(z_0,\omega) U(\omega),
            U^*(\theta\omega)}}\,. \label{eq:fromderivdiscrete}
\end{align}

\section{Proof of Theorem \ref{thm:main}}
\subsection{Exponential Separation}
Throughout this section, the parameter $z \in (a, b)$ is fixed, and we
will often omit the dependence on $z$ to avoid overloading the
notation.

Since $s \to A(s)$ is continuous, according to Theorem 2.2.1 of
\citet{ArnoldRDS}, the pathwise random differential equation
\begin{equation}\label{eq:xedolin}
  \frac{dx(t)}{dt} = A(\omega_t) x(t),
\end{equation}
uniquely generates through its maximal solution a random dynamical 
system $\Phi$ over $\theta$:
$\Phi(t, \omega)$ is the fundamental solution associated with \eqref{eq:xedolin}, i.e., the solution of
\begin{equation}
  \frac{d \Phi(t, \omega)}{dt} = A(\omega_t) \Phi(t, \omega), \quad \Phi(0, \omega) = I_d.
\end{equation}

Since the average matrix $\bar{A}$ is irreducible, Lemma 6 (i) of \cite{Benaim2023} implies that almost surely, there exists an integer $N$ such that for all $t \geq N$, the matrix $\Phi(t, \omega)$ has all its coefficients strictly positive.

We will apply Theorem 4.1 of \citet{MierShen13}. Condition (O1) is
satisfied because the matrices $A(s)$ are cooperative. Condition (O2)
of boundedness is also satisfied : indeed by considering $a<c<d<b$,
thanks to the continuity of $(z,s)\to A(z,s)$ on the compact
$[c,d]\times S$, we can assume without loss in generality that  $C = \sup_{i, j, s} \valabs{A(s)_{ij}} < +\infty$, and thus
\begin{equation*}
  \exp\etp{\int_0^1 \sum_{l=1}^d \max_{1\le j\le d} A_{lj}(\omega_t) \, dt} \leq e^{Cdt}.
\end{equation*}
Furthermore the assumption $\esp{\unsur{\Gamma_z}}< +\infty$ implies
condition (O3).

We deduce the existence of  real numbers $\Lambda$, $\sigma \in (0, +\infty)$, random variables $U, V$ with values in $\rdpp$, a measurable set $\tilde{\Omega}$ of probability 1 such that for all $\omega \in \tilde{\Omega}$:
\begin{enumerate}
  \item $\crochet{U(\omega), \vun} = \crochet{V(\omega), U(\omega)} = 1$.
  \item Let $q(t, \omega) = \valabs{\Phi(t, \omega) U(\omega)}$. Then,
  \begin{equation*}
    \Phi(t, \omega) U(\omega) = q(t, \omega) U(\theta_t \omega), \quad \Phi^*(t, \omega) V(\theta_t \omega) = q(t, \omega) V(\omega).
  \end{equation*}
  \item $\Lambda$ is the generalized Lyapunov exponent. For all $x \in \rdpp$,
  \begin{align}
    \Lambda &= \lim_{t \to +\infty} \frac{1}{t} \log \norme{\Phi(t, \omega)} = \lim_{t \to +\infty} \frac{1}{t} \log \valabs{\Phi(t, \omega) x} \notag\\
    &= \lim_{t \to +\infty} \frac{1}{t} \log \norme{\Phi^*(t, \omega)} = \lim_{t \to +\infty} \frac{1}{t} \log \valabs{\Phi^*(t, \omega) x} \notag\\
    &= \lim_{t \to +\infty} \frac{1}{t} \ln q(t, \omega). \label{eq:limqt}
  \end{align}
  \item There is exponential separation, that is
  \begin{equation}\label{eq:expsep}
    \lim_{t \to +\infty} \frac{1}{t} \log \frac{\norme{\Phi(t, \omega)_{\mid V(\omega)^\perp}}}{q(t, \omega)} = \lim_{t \to +\infty} \frac{1}{t} \log \frac{\norme{\Phi^*(t, \omega)_{\mid U(\omega)^\perp}}}{q(t, \omega)} = -\sigma.
  \end{equation}
\end{enumerate}

\begin{lemma}
We have the integral formula
\begin{equation}
  \Lambda = \esp{\crochet{A(\omega_0) U(\omega), \vun}}.
\end{equation}
\end{lemma}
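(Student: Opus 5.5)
The plan is to differentiate $\log q(t,\omega)$ in $t$, which turns $\log q$ into an additive functional along the shift, and then apply the ergodic theorem.

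\emph{Step 1: the derivative of $q$.} By item 2, $q(t,\omega)U(\theta_t\omega)=\Phi(t,\omega)U(\omega)$. The vector $U(\omega)$ is fixed and $\Phi(t,\omega)$ is absolutely continuous in $t$ with derivative $A(\omega_t)\Phi(t,\omega)$. Since $\Phi(t,\omega)U(\omega)\in\rdpp$, we have
\[
q(t,\omega)=\crochet{\Phi(t,\omega)U(\omega),\vun},
\]
so $q$ is absolutely continuous and positive, and for almost every $t$
\begin{equation*}
\frac{d}{dt}\log q(t,\omega)=\frac{\crochet{A(\omega_t)\Phi(t,\omega)U(\omega),\vun}}{\crochet{\Phi(t,\omega)U(\omega),\vun}}=\crochet{A(\omega_t)U(\theta_t\omega),\vun}.
\end{equation*}
Here we used the cocycle identity: dividing $\Phi(t,\omega)U(\omega)$ by $q(t,\omega)$ gives $U(\theta_t\omega)$. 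Integrating from $0$ to $t$, and using $q(0,\omega)=\valabs{U(\omega)}=1$, gives
\begin{equation*}
\log q(t,\omega)=\int_0^t f(\theta_s\omega)\,ds,\qquad f(\omega):=\crochet{A(\omega_0)U(\omega),\vun},
\end{equation*}
because $(\theta_s\omega)_0=\omega_s$.

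\emph{Step 2: integrability of $f$.} Since $\valabs{U}=1$ and the coefficients of $A$ are bounded by $C$ (the compactness reduction made above), we get $\valabs{f}\le dC$. Hence $f\in L^1(\PP)$.

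\emph{Step 3: ergodic theorem.} The flow $\theta_t$ preserves $\PP$ and is ergodic by Hypothesis~\ref{hyp:un}(2), since the process is stationary and ergodic. Birkhoff's ergodic theorem for measurable flows therefore gives, almost surely,
\[
\frac1t\int_0^t f(\theta_s\omega)\,ds\to\esp{f}.
\]
Combining this with \eqref{eq:limqt}, i.e.\ $\frac1t\log q(t,\omega)\to\Lambda$ on $\tilde\Omega$, yields $\Lambda=\esp{\crochet{A(\omega_0)U,\vun}}$.

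\emph{Main obstacle.} The delicate point is joint measurability of $(s,\omega)\mapsto f(\theta_s\omega)$, which the continuous-time ergodic theorem needs. I would handle it in one of two ways. The first is to note that $s\mapsto U(\theta_s\omega)=\Phi(s,\omega)U(\omega)/q(s,\omega)$ is continuous and $s\mapsto\omega_s$ is càdlàg, so the integrand is jointly measurable. The alternative is to bypass the issue: apply the discrete-time ergodic theorem to $g(\omega)=\int_0^1 f(\theta_s\omega)\,ds=\log q(1,\omega)$ along integer times. Then $\esp{g}=\esp{f}$ by Fubini and stationarity, and the fractional part of $t$ is controlled by the bound $\valabs{f}\le dC$.
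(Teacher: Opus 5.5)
Your proof is correct and is essentially the paper's argument. The cocycle identity $\Phi(t,\omega)U(\omega)=q(t,\omega)U(\theta_t\omega)$ turns $\log q(t,\omega)$ into the additive functional $\int_0^t\crochet{A(\omega_s)U(\theta_s\omega),\vun}\,ds$, and the ergodic theorem together with \eqref{eq:limqt} identifies the limit. Your bound $\valabs{f}\le dC$ and the right-continuity measurability remark fill in details the paper leaves implicit.

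One caution, about your discrete-time fallback only. Ergodicity of the flow does not make $\theta_1$ ergodic: in the periodic example of the last section, $\theta_1\omega=\omega$. So Birkhoff there gives the conditional expectation of $g$ given the $\theta_1$-invariant $\sigma$-field, not $\esp{g}$. This is harmless, because the limit is already known to be the constant $\Lambda$. But note that $\esp{g}=\esp{f}$ still needs Fubini, i.e.\ the joint measurability that route was meant to avoid.
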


\begin{proof}
We have
\begin{align*}
  q(t, \omega) &= \crochet{\Phi(t, \omega) U(\omega), \vun} \\
  &= \crochet{U(\omega), \vun} + \int_0^t \crochet{A(\omega_s) \Phi(s, \omega) U(\omega), \vun} \, ds \\
  &= 1 + \int_0^t q(s, \omega) \crochet{A(\omega_s) U(\theta_s \omega), \vun} \, ds.
\end{align*}
Consequently, by the ergodic theorem, we have the almost sure limit,
\begin{align*}
  \frac{1}{t} \ln q(t, \omega) = \frac{1}{t} \int_0^t \crochet{A(\omega_s) U(\theta_s \omega), \vun} \, ds \to \esp{\crochet{A(\omega_0) U(\omega), \vun}},
\end{align*}
which gives the desired identity thanks to \eqref{eq:limqt}.
\end{proof}

  \begin{lemma}\label{lem:intphiazun}
    The random variables $\log^+(M_{\phi_A(z,1,\omega)})$ and
    $\log^+(\unsur{m_{\phi_A(z,1,\omega)}})$ are in $L^1(\PP)$.
  \end{lemma}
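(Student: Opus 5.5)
We bound the entries of the time-one fundamental matrix $\phi_A(z,1,\omega)=\Phi(1,\omega)$ from above and from below by elementary Grönwall-type arguments. For the upper bound we use the uniform bound $C=\sup_{i,j,s}\valabs{A(z,s)_{ij}}<+\infty$, obtained from continuity of $A$ on the compact $[c,d]\times S$. Then $\norme{A(\omega_t)}\le Cd$ in the operator norm induced by $\valabs{\cdot}$, so Grönwall gives $\norme{\Phi(1,\omega)}\le e^{Cd}$. Hence $M_{\Phi(1,\omega)}\le e^{Cd}$ deterministically, and $\log^+ M_{\Phi(1,\omega)}\le Cd$ is bounded, hence integrable.

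For the lower bound on the minimal entry, we use the Metzler structure. We decompose $A(\omega_t)=D(\omega_t)+B(\omega_t)$, with $D$ diagonal and $B$ having nonnegative off-diagonal entries. Setting $\Psi(t)=e^{Ct}\Phi(t,\omega)$, we get $\Psi'=(A(\omega_t)+CI)\Psi$, where $A+CI$ has all entries nonnegative. Hence $\Psi(t)\ge 0$ entrywise and $\Psi(t)\ge I$, since the Dyson/Picard series has nonnegative terms with first term $I$. Keeping the first two terms of the Picard series, we obtain entrywise
\begin{equation*}
  \Psi(1)\ \ge\ I+\int_0^1 (A(\omega_t)+CI)\,dt \ \ge\ \int_0^1 A(\omega_t)_{\text{off}}\,dt .
\end{equation*}
Each off-diagonal entry is therefore at least $\int_0^1 m_z(\omega_t)\,dt\ge \Gamma_z(\omega)$, and each diagonal entry is at least $1$. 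Thus
\begin{equation*}
  m_{\Phi(1,\omega)}\ \ge\ e^{-C}\min(1,\Gamma_z(\omega)).
\end{equation*}
Here we tacitly assume $\Gamma_z>0$ a.s., which follows from $\esp{1/\Gamma_z}<\infty$.

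It then follows that
\begin{equation*}
  \log^+\unsur{m_{\Phi(1,\omega)}}\le C+\log^+\unsur{\Gamma_z(\omega)}\le C+\unsur{\Gamma_z(\omega)} ,
\end{equation*}
which is integrable by item 3 of Hypothesis \ref{hyp:un}.

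The only delicate point is the entrywise comparison of the Picard series: we must make sure that all iterated integrals are nonnegative after the shift by $CI$, and that the shift factor $e^{-C}$ is accounted for correctly. Both are routine once the positivity of $A+CI$ is noted. Measurability of $\Gamma_z$ is taken for granted, since the paths are càdlàg and so the infimum can be taken over rational times together with $t=1$.
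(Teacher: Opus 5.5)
Your proof is correct and follows the same route as the paper. A Grönwall bound gives $M_{\Phi(1,\omega)}\le e^{Cd}$, and the Metzler structure gives the entrywise lower bound $m_{\Phi(1,\omega)}\ge e^{-C}\min(1,\Gamma_z)$, which is the paper's estimate \eqref{eq:infmpha}. The only difference is in mechanics: the paper gets the lower bound from scalar differential inequalities for $\Phi_{ii}$ and $\Phi_{ji}$ (comparison plus variation of constants), whereas you shift by $e^{Ct}$ and truncate the nonnegative Picard series, which also proves the entrywise nonnegativity of $\Phi$ that the paper asserts without justification.
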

  \begin{proof}
    Since $K=\sup_{s,i,j} \valabs{A(s)_{ij}}<+\infty$, we deduce that
    if $y_t=\phi(z,t,\omega) y_0$, since
    \begin{equation}
      y_t = y_0 + \int_0^t A(z,\omega_s) y_s ds
    \end{equation}
    that, by Gronwall's inequality $\norme{y_t} \le e^{K d t}
    \norme{y_0}$, which implies $M_{\phi_A(z,1,\omega)} \le e^{K d
      t}$.

  Since $A_{ij}(z,\omega) \geq 0$ if $i \neq j$, and $A_{ij}(z,\omega) \geq -K$, we have $\Phi(t, \omega)_{ij} \geq 0$ and
\begin{equation*}
  \frac{d}{dt} \Phi(z,t, \omega)_{ii} \geq A_{ii}(z,\theta_t \omega) \Phi(z,t, \omega)_{ii} \geq -K \Phi(z,t, \omega)_{ii}.
\end{equation*}
Since $\Phi(z,0, \omega)_{ii} = 1$, it follows that
\begin{equation*}
  \Phi(z,t, \omega)_{ii} \geq e^{-K t}.
\end{equation*}
Similarly, if $j \neq i$,
\begin{equation*}
  \frac{d}{dt} \Phi(z,t, \omega)_{ji} \geq A_{jj}(z,\theta_t \omega) \Phi(z,t, \omega)_{ji} + A_{ji}(z,\theta_t \omega) \Phi(z,t, \omega)_{ii}
\end{equation*}
implies that
\begin{equation*}
  \Phi(z,t, \omega)_{ji} \geq \int_0^t e^{\int_s^t
    A_{jj}(z,\theta_\tau \omega) d\tau} A_{ji}(z,\theta_s \omega)
  \Phi(z,s, \omega)_{ii} \, ds \geq e^{-K t} \int_0^t
  A_{ji}(z,\theta_s \omega) \, ds \ge t e^{-Kt} \Gamma_z.
\end{equation*}
Consequently,
\begin{equation}
  \label{eq:infmpha}
 m_{\phi_A(z,1,\omega)} \ge C \inf(1,\Gamma_z)   
\end{equation}
and this concludes the proof since $\esp{\unsur{\Gamma_z}}< +\infty$.
\end{proof}

\subsection{Differentiability of $\Lambda$ and the integral formula
  \eqref{eq:devlyapuesp}}
Thanks to Lemma \ref{lem:intphiazun} we can apply the result of section \ref{sec:tempsdiscret} to the
matrix random function $(z,\omega) \to \phi_A(z,1,\omega)\in
\Mrond_+$. The discrete random cocycle associated is
\begin{equation}
  \Phi_A(z,1,\theta^{n-1}\omega) \ldots \Phi_A(z,1,\omega) = \Phi_A(z,n\omega)\,
\end{equation}
so the top Lyapunov of this cocycle is exactly $\Lambda(z)$:
\begin{equation}
  \lambda(\Phi_A(z,1,\omega)) = \lim_{n\to +\infty} \unsur{n} \log
  \norme{\Phi_A(z,n,\omega)} = \Lambda(z)\,.
  \end{equation}

The unit vectors associated to $\Phi_A(z,1,\omega)$ and
$\Phi_A^*(z,1,\omega)$ are $U_z(\omega)$ and $U^*_z(\omega)$. From the
normalisation $\crochet{U_z,V_z}=1$ we deduce that $V_z(\omega) =
\frac{U_z^*(\omega)}{\crochet{U_z(\omega),U^*_z(\omega)}}$. Therefore,
formula \eqref{eq:fromderivdiscrete} is
\begin{equation}\label{eq:exprderivintermediaire}
  \Lambda'(z) = \esp{\frac{\crochet{\partial_z \Phi_A(z,1,\omega) U_z(\omega),V_z(\theta\omega)}}{q(1,z,\omega)}}\,.
  \end{equation}
Of course, we need to check the integrability of the random variable
$T$ defined in \eqref{eq:intassumption}:
\begin{equation}
  \esp{\norme{T(\omega)}_\infty} = \esp{\sup_{i,j}
    \frac{\valabs{\partial_z\Phi_A(z,1,\omega)_{ij}}}{\Phi_A(z,1,\omega)_{ij}}}
  \le C \esp{\unsur{\inf(1,\Gamma_z)}} < +\infty\,,
\end{equation}
where we used inequality \eqref{eq:infmpha}.

Observe that if $\phi_M$ is the fundamental solution associated with the continuous matrix function $t \to M(t)$:
\begin{equation*}
  \frac{d \phi_M(t)}{dt} = M(t) \phi_M(t), \quad \phi_M(0) = I_d,
\end{equation*}
then we have the integration by parts formula
\begin{equation*}
  \phi_{M+N}(t) = \phi_M(t) + \int_0^t \phi_M(t-s) N(s) \phi_{M+N}(s) \, ds.
\end{equation*}

We deduce that the derivative of the function $z \to \Phi(z, t, \omega)$ is
\begin{equation*}
  \partial_z \Phi(z, t, \omega) = \int_0^t \Phi(z, t-s, \theta_s \omega) \partial_z A(z, \omega_s) \Phi(z, s, \omega) \, ds.
\end{equation*}

To conclude our proof, it remains to show that  expression \eqref{eq:exprderivintermediaire} coincides with $\esp{\crochet{V_z(\omega), \partial_zA(z, \omega_0) U_z(\omega)}}$. 

Observe that

\begin{align*}
  \crochet{V_z(\theta \omega), \partial_z \Phi(1, z, \omega) U_z(\omega)} &= \int_0^1 \crochet{V_z(\theta \omega), \Phi(z, 1-s, \theta_s \omega) \partial_z A(z, \omega_s) \Phi(z, s, \omega) U_z(\omega)} \, ds \\
  &= \int_0^1 q(s, \omega) \crochet{V_z(\theta \omega), \Phi(z, 1-s, \theta_s \omega) \partial_z A(z, \omega_s) U_z(\theta_s \omega)} \, ds \\
  &= \int_0^1 q(s, \omega) \crochet{\Phi^*(z, 1-s, \theta_s \omega) V_z(\theta \omega), \partial_z A(z, \omega_s) U_z(\theta_s \omega)} \, ds \\
  &= \int_0^1 q(s, \omega) q(1-s, \theta_s \omega) \crochet{V_z(\theta_s \omega), \partial_z A(z, \omega_s) U_z(\theta_s \omega)} \, ds \\
  &= q(1, \omega) \int_0^1 \crochet{V_z(\theta_s \omega), \partial_z A(z, \omega_s) U_z(\theta_s \omega)} \, ds,
\end{align*}
since by the cocycle property, $q(1, \omega) = q(s, \omega) q(1-s, \theta_s \omega)$. Consequently, by stationarity,
\begin{align*}
  \Lambda'(z) &= \esp{\int_0^1 \crochet{V_z(\theta_s \omega), \partial_z A(z, \omega_s) U_z(\theta_s \omega)} \, ds} \\
  &= \int_0^1 \esp{\crochet{V_z(\theta_s \omega), \partial_z A(z, \omega_s) U_z(\theta_s \omega)}} \, ds \\
  &= \esp{\crochet{V_z(\omega), \partial A(z, \omega_0) U_z(\omega)}}.
\end{align*}

\begin{lemma}
  \label{lem:intvomega}
  If, for some $\delta>0$, $\esp{\Gamma_z^{-\delta}}< +\infty$, then
  $\esp{\crochet{V(\omega),\vun}^\delta} < +\infty$.
\end{lemma}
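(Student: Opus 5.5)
The idea is to bound $\crochet{V(\omega),\vun}$ pointwise by a constant multiple of $1/\inf(1,\Gamma_z(\omega))$. The integrability of $\crochet{V,\vun}^\delta$ then follows at once from $\esp{\Gamma_z^{-\delta}}<+\infty$. We work with the time-one matrix $\Phi=\Phi(z,1,\omega)$ and use the invariance relations from the exponential separation subsection. At time $t=1$ these read
\begin{equation*}
  \Phi U(\omega) = q(1,\omega)\, U(\theta\omega),\qquad \Phi^* V(\theta\omega) = q(1,\omega)\, V(\omega),
\end{equation*}
together with the normalisations $\crochet{U,\vun}=\crochet{U,V}=1$, valid at both $\omega$ and $\theta\omega$.

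\textbf{Step 1: upper bound for $\crochet{V(\omega),\vun}$.} Pair the adjoint relation with $\vun$:
\begin{equation*}
  \crochet{V(\omega),\vun}=\frac{\crochet{V(\theta\omega),\Phi\vun}}{q(1,\omega)}.
\end{equation*}
By Lemma~\ref{lem:intphiazun} (its Gronwall part), every entry of $\Phi$ is at most $M:=e^{Kd}$. Hence $\Phi\vun\le dM\,\vun$ componentwise. Since $V(\theta\omega)\in\rdpp$, the numerator is at most $dM\crochet{V(\theta\omega),\vun}$.

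\textbf{Step 2: lower bound for $q(1,\omega)$.} Pair the direct relation with $V(\theta\omega)$ and use $\crochet{U(\theta\omega),V(\theta\omega)}=1$:
\begin{equation*}
  q(1,\omega)=\crochet{V(\theta\omega),\Phi U(\omega)}.
\end{equation*}
Since $U(\omega)$ lies in the simplex, each component satisfies $(\Phi U)_i=\sum_j\Phi_{ij}U_j\ge m_\Phi$. By \eqref{eq:infmpha}, $m_\Phi\ge C\inf(1,\Gamma_z)$. Therefore $q(1,\omega)\ge C\inf(1,\Gamma_z)\crochet{V(\theta\omega),\vun}$.

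\textbf{Step 3: conclusion.} Dividing the two bounds, the unknown factor $\crochet{V(\theta\omega),\vun}$ cancels. This gives
\begin{equation*}
  \crochet{V(\omega),\vun}\le \frac{d\,e^{Kd}}{C\,\inf(1,\Gamma_z(\omega))}
  \quad\text{almost surely.}
\end{equation*}
Raising to the power $\delta$ yields $\crochet{V,\vun}^\delta\le c\,(1+\Gamma_z^{-\delta})$, which is integrable by assumption.

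The main obstacle is finding this cancellation. The naive approach of iterating the adjoint relation only produces a recursion in $V(\theta\omega)$, with nothing controlling it. The key is to use the normalisation $\crochet{U,V}=1$ at time $\theta\omega$ to bound $q(1,\omega)$ from below by the same quantity $\crochet{V(\theta\omega),\vun}$. This avoids any two-sided shift, which is not available on the one-sided path space. One should also check that the constant $C$ in \eqref{eq:infmpha} (coming from $t e^{-Kt}$ at $t=1$ and $e^{-K}$ on the diagonal) is deterministic.
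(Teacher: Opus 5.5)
Your proof is correct. It reaches the same kind of bound, $\crochet{V,\vun}\le C/\inf(1,\Gamma_z)$, but by a different route from the paper.

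\textbf{The paper's route.} The paper never uses the adjoint relation. It starts from $1=\crochet{V,U}\ge\crochet{V,\vun}\min_i U_i$. It then bounds $\min_i U_i(\theta\omega)$ from below through the forward relation $q(1,\omega)U(\theta\omega)=\Phi(1,\omega)U(\omega)$, combining two estimates:
\begin{itemize}
\item $m_{\Phi}\ge e^{-K}\inf(1,\Gamma_z)$;
\item a separate upper bound $q(1,\omega)\le d\,e^{Kd}$, obtained by summing the entrywise bound.
\end{itemize}
This controls $\crochet{V(\theta\omega),\vun}$ in terms of $\Gamma_z(\omega)$. The paper then concludes by stationarity, since $\crochet{V(\theta\omega),\vun}$ has the same law as $\crochet{V(\omega),\vun}$.

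\textbf{Your route.} You propagate $V$ backward with $q\,V(\omega)=\Phi^*V(\theta\omega)$ and write $q=\crochet{V(\theta\omega),\Phi U(\omega)}$. The unknown factor $\crochet{V(\theta\omega),\vun}$ then cancels, giving the pointwise estimate $\crochet{V(\omega),\vun}\le\max_i(\Phi\vun)_i/\min_i(\Phi U(\omega))_i$ at the same $\omega$.

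\textbf{What each buys.} Your estimate needs no separate upper bound on $q(1,\omega)$ and no appeal to the invariance of $\PP$. The paper's route gives, as a by-product, the quantitative statement $\min_i U_i(\theta\omega)\ge c\,\inf(1,\Gamma_z(\omega))$, i.e.\ $U$ stays away from the boundary of the simplex.

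\textbf{A correction to your closing remark.} The paper's approach does not need a two-sided shift either. It bounds $V(\theta\omega)$ rather than $V(\omega)$ and only uses that $\theta$ preserves $\PP$, which holds on the one-sided path space.
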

\begin{proof}
  For simplicity we omit the dependency on $z$. First observe that
  \begin{equation}
    1 = \crochet{V(\omega),U(\omega)} \ge \crochet{V(\omega),\vun}\, \inf_i U_i(\omega) \,.
    \end{equation}
    On the one hand, since from the proof of Lemma
    \ref{lem:intphiazun} $\Phi_{ij}(1,\omega) \le^{K d}$ we have
    \begin{equation}
      q(1,\omega) U_i(\theta\omega) = \sum_j \Phi_{ij} (1,\omega) U_j(\theta\omega)
        \le e^{K d} \sum_j U_j(\theta\omega) = e^{Kd}\,.
      \end{equation}
      Summing on index $i$ yields
      \begin{equation}
        q(1,\omega) \le d e^{Kd}.
      \end{equation}
      On the other hand, since, from the proof of Lemma
    \ref{lem:intphiazun}, $\Phi_{ij}(1,\omega) \ge e^{-K}
    \inf(1,\Gamma_z)$, we get
    \begin{equation}
      q(1,\omega) U_i(\theta\omega) = \sum_j \Phi_{ij}(1,\omega) U_j(\theta\omega)
        \ge  e^{-K} \inf(1,\Gamma_z)\sum_j U_j(\theta\omega) = e^{-K} \inf(1,\Gamma_z)\,.
      \end{equation}
      Combining all these inequalities, we obtain
      \begin{equation}
        \crochet{V(\theta \omega),\vun} \le C \inf(1,\Gamma_z)^{-1}\,,
      \end{equation}
      and we conclude since $\crochet{V(\theta \omega),\vun}$ is
      distributed as $\crochet{V( \omega),\vun}$.
\end{proof}

\section{Approximation of the derivative of the top Lyapunov exponent}
In addition to Hypothesis~\ref{hyp:un}  that ensures the differentiability of
the top Lyapunov exponent $\Lambda(z)$, we give suffient conditions
for a limit theorem that gives an approximation of the derivative
$\Lambda'(z)$.

Following \citet{CongSon2016} we consider the set
$\Lrond^\infty(\Omega,\Rdd)$ of bounded measurable
matrix-valued maps $M:\Omega\to \Rdd$ such that
\begin{equation}
\esssup_{\omega\in\Omega}\norme{M(\omega)} < \infty.
\end{equation}
We endow $\Lrond^\infty(\Omega,\Rdd)$ with the 
metric

\begin{equation}
  \rho_\infty(M,N)=\esssup_{\omega\in\Omega}\norme{M(\omega)-N(\omega)}\,.
\end{equation}

For $M\in \Lrond^\infty(\Omega,\Rdd)$ let $\Phi_M(t,\omega)x$ be the
solution starting from $x(0)=x$ of the linear random differential
equation
  \begin{equation}
    \label{eq:edsconga}
    \frac{dx(t)}{dt} = M(\theta_t \omega) x(t)\,.
  \end{equation}
Then $\Phi_M:\R\to \Omega \to \Rdd$ defines a continuous random
dynamical system. This system is said to be \emph{integrally separated} if
there exists a set of full measure $\hat{\Omega}$, positive constants
$L,\alpha$ and for all $\omega \in \hat{\Omega}$ an invariant measurable decomposition
\begin{equation}
  \R^d = W_1(\omega) \oplus W_2(\omega) \oplus \cdots \oplus W_d(\omega)
\end{equation}
such that for all $i=1,\ldots,d$, $\dim W_i(\omega)=1$ and for all $ u
\in \oplus_{j=1}^i W_j(\omega)$, $v \in \oplus_{j=i+1}^d W_j(\omega)$,
$u\neq 0$, $v\neq 0$,$t\ge 0$,
\begin{equation}
  \frac{\norme{\Phi_M(t,\omega) u}}{\norme{u}} \ge L e^{t\alpha} \frac{\norme{\Phi_M(t,\omega) v}}{\norme{v}}\,.
\end{equation}

\begin{theorem}
  \label{thm:an}
  Assume that Hypothesis~\ref{hyp:un} is satisfied and that for a
  fixed $z\in (a,b)$ we have integral separation for the random
  dynamical system
  \begin{equation}
    \frac{dy}{dt} = A(z,\omega_t) y\,.
  \end{equation}
Assume furthermore that for some $\delta>0$,
$\esp{\Gamma_z^{-(1+\delta)}}< +\infty$. Then for any initial value
$(y_0,0)$ with $y_0\in\rdpp$, the solution $(y(t),\partial_zy(t))$ of the random linear
differential system
\begin{align}
  \frac{dy}{dt} &= A(z,\omega_t) y\\
  \frac{d\partial_z y}{dt} &= A(z,\omega_t) \partial_z y + \partial_z A(z,\omega_t)
  y
\end{align}
satisfies, for any sequence $t_n \ge n^{\unsur{1+\delta}}$,
\begin{equation}
  \lim_{n\to +\infty} \frac{\crochet{\partial_zy(t_n),\vun}}{t_n
    \crochet{y(t_n),\vun}} = \Lambda'(z)\quad a.s.
\end{equation}
\end{theorem}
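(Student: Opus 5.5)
The proof will be a direct comparison of the two components $y(t)$ and $\partial_z y(t)$ with the one-dimensional dynamics carried by $U_z$ and $V_z$ from Section~4.1. Throughout I drop the index $z$. By uniqueness, $y(t)=\Phi(t,\omega)y_0$. By the integration by parts formula used in the proof of Theorem~\ref{thm:main},
\begin{equation*}
\partial_z y(t)=\partial_z\Phi(t,\omega)y_0=\int_0^t \Phi(t-s,\theta_s\omega)\,\partial_zA(\omega_s)\,\Phi(s,\omega)y_0\,ds .
\end{equation*}
I split $y_0$ along the invariant decomposition $\R^d=\R U(\omega)\oplus V(\omega)^\perp$ as $y_0=c(\omega)U(\omega)+r_0$, with $c=\crochet{y_0,V(\omega)}>0$ because $y_0\in\rdpp$ and $V\in\rdpp$. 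This gives
\begin{equation*}
\Phi(s,\omega)y_0=c\,q(s,\omega)U(\theta_s\omega)+\Phi(s,\omega)r_0 .
\end{equation*}
Dually, $\crochet{\Phi(t-s,\theta_s\omega)w,\vun}=\crochet{w,\Phi^*(t-s,\theta_s\omega)\vun}$. Writing $\vun=\crochet{\vun,U(\theta_t\omega)}V(\theta_t\omega)+r^*$ with $r^*\in U(\theta_t\omega)^\perp$, and using $\crochet{\vun,U}=1$, I get
\begin{equation*}
\Phi^*(t-s,\theta_s\omega)\vun=q(t-s,\theta_s\omega)V(\theta_s\omega)+\text{error}.
\end{equation*}

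Multiplying the two main terms and using the cocycle identity $q(s,\omega)q(t-s,\theta_s\omega)=q(t,\omega)$, the principal part of $\crochet{\partial_z y(t),\vun}$ is
\begin{equation*}
c\,q(t,\omega)\int_0^t f(\theta_s\omega)\,ds,\qquad f(\omega)=\crochet{\partial_zA(\omega_0)U(\omega),V(\omega)} .
\end{equation*}
The principal part of $\crochet{y(t),\vun}$ is $c\,q(t,\omega)$. Since $\valabs{f}\le C\crochet{V,\vun}$, Lemma~\ref{lem:intvomega} (applied with exponent $1+\delta\ge1$) gives $f\in L^1$. Birkhoff's theorem along the flow then yields $\frac1t\int_0^t f(\theta_s\omega)\,ds\to\esp{f}=\Lambda'(z)$, which is the formula \eqref{eq:devlyapuesp} of Theorem~\ref{thm:main}. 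So the theorem reduces to showing that all cross and remainder terms are $o(t\,q(t,\omega))$ along $t=t_n$.

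The remainder terms are of three kinds: the $r_0$-part, the $r^*$-part, and the product of both. I will bound them using integral separation, which gives, uniformly in $\omega$ and with deterministic $L,\alpha$,
\begin{equation*}
\norme{\Phi(s,\omega)r_0}\le L^{-1}e^{-\alpha s}\,\frac{\norme{r_0}}{\norme{U(\omega)}}\,q(s,\omega)\norme{U(\omega)},
\end{equation*}
and the analogous bound for $\Phi^*$ restricted to $U^\perp$ after passing to the adjoint system. This is where I use integral separation rather than the merely asymptotic statement \eqref{eq:expsep}: the latter has random constants that cannot be controlled uniformly in the starting point $\theta_s\omega$. The constants $\norme{r_0}$ and $c^{-1}$ depend only on $\omega$, so the $r_0$-part contributes $O\bigl(q(t)\int_0^t e^{-\alpha s}\crochet{V(\theta_s\omega),\vun}\,ds\bigr)$. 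For the $r^*$-part, $\norme{r^*}$ and the angle between $V(\theta_t\omega)$ and $U(\theta_t\omega)^\perp$ are controlled by $\crochet{V(\theta_t\omega),\vun}$, so this part contributes $O\bigl(q(t)\,\crochet{V(\theta_t\omega),\vun}\int_0^t e^{-\alpha(t-s)}\,ds\bigr)$. Here the factor $\crochet{V(\theta_t\omega),\vun}$ is bounded by $C\inf(1,\Gamma(\theta_{t-1}\omega))^{-1}$, as in the proof of Lemma~\ref{lem:intvomega}.

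The main obstacle is the endpoint term evaluated at the random time $\theta_t\omega$. It is not $o(t)$ for every $t$ almost surely, only along sparse sequences. This is exactly where the assumption $\esp{\Gamma^{-(1+\delta)}}<\infty$ and the condition $t_n\ge n^{1/(1+\delta)}$ enter. By stationarity and Markov's inequality,
\begin{equation*}
\PP\bigl(\crochet{V(\theta_{t_n}\omega),\vun}>\ee t_n\bigr)\le C\ee^{-(1+\delta)}t_n^{-(1+\delta)}\le C'\ee^{-(1+\delta)}n^{-1}.
\end{equation*}
This bound is only borderline summable, so Borel--Cantelli does not apply directly. I would first try to sharpen it, either with a slightly larger moment or by a subsequence-and-interpolation argument on blocks, to obtain $\crochet{V(\theta_{t_n}\omega),\vun}=o(t_n)$ almost surely. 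The $r_0$-part is easier: it is $o(t)$ because $e^{-\alpha s}\crochet{V(\theta_s\omega),\vun}$ is almost surely integrable on $\R_+$, since its expectation is finite. Dividing the decomposition of $\crochet{\partial_z y(t_n),\vun}$ by $t_n\crochet{y(t_n),\vun}=t_n c\,q(t_n)(1+o(1))$ then gives the limit $\Lambda'(z)$.
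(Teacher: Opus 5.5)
Your argument follows the paper's proof closely. You use the same splittings $y_0=c\,U+r_0$ and $\vun=V(\theta_t\omega)+r^*$, and the same four terms: the paper's $T_{11},T_{21},T_{12},T_{22}$ are your main term, $r_0$-part, $r^*$-part and product term. You also use integral separation in the same way, to get deterministic constants $L,\alpha$. Your treatment of the $r_0$-part is slightly simpler than the paper's Lemma~\ref{lem:blob}: the identity $\mathbb{E}\int_0^\infty e^{-\alpha s}\langle V(\theta_s\omega),\vun\rangle\,ds=\mathbb{E}\langle V,\vun\rangle/\alpha<\infty$ gives almost sure integrability on $[0,\infty)$ using only a first moment.

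However, the step you yourself flag as the main obstacle is left unresolved. That step is $\langle V(\theta_{t_n}\omega),\vun\rangle=o(t_n)$ almost surely. It is exactly where the hypotheses $\mathbb{E}[\Gamma_z^{-(1+\delta)}]<\infty$ and $t_n\ge n^{1/(1+\delta)}$ are consumed, and both the $r^*$-part and the product term depend on it. Neither remedy you suggest closes it. No moment beyond $1+\delta$ is available under the hypotheses. A block or interpolation argument is unnecessary, since the claim is only along $(t_n)$, and you do not spell it out.

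The missing idea is that Borel--Cantelli applies directly with exactly this moment and this rate. What fails is applying Markov's inequality termwise and then summing, which loses a harmonic factor. Put $X=\langle V,\vun\rangle$ and $X_n=\langle V(\theta_{t_n}\omega),\vun\rangle$; by stationarity $X_n$ has the law of $X$. Now sum the tails before estimating:
\[
\sum_{n\ge1}\mathbb{P}\bigl(X_n\ge\varepsilon t_n\bigr)=\mathbb{E}\Bigl[\#\bigl\{n\ge1:\ t_n\le X/\varepsilon\bigr\}\Bigr]\le\mathbb{E}\Bigl[\#\bigl\{n\ge1:\ n\le (X/\varepsilon)^{1+\delta}\bigr\}\Bigr]\le\varepsilon^{-(1+\delta)}\,\mathbb{E}\bigl[X^{1+\delta}\bigr]<\infty .
\]
The middle inequality uses $t_n\ge n^{1/(1+\delta)}$, and finiteness comes from Lemma~\ref{lem:intvomega}; this is the paper's Lemma~\ref{lem:statdelta}. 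It gives $X_n/t_n\to0$ almost surely. After dividing by $t_n\,q(t_n,\omega)$, the $r^*$-part is bounded by $C\,c(\omega)\,X_n/t_n$ and the product term by $C\|r_0\|\,X_n e^{-\alpha t_n}$, so both vanish. With that step inserted, the rest of your proof goes through.
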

\begin{remark}
  One can see this theorem to be just a way to interchange derivative
  and limit in the formula
  \begin{equation}
    \frac{d \Lambda(z)}{dz} = \frac{d}{dz}\etp{ \lim_{t\to \infty}
    \unsur{t} \log \crochet{y(t),1}}\,.
  \end{equation}
\end{remark}
\begin{proof}
  We shall sometimes omit the dependency on $z$ to simplify
  notations. The integral separation yields stronger bounds than the
  exponential separation \eqref{eq:expsep}\,.

  Indeed, we have the decomposition $\R^d =W_1(\omega) \oplus \cdots
  \oplus W_d(\omega)$ with
  \begin{equation}W_1(\omega) = \R U(\omega)\quad\text{and}\quad
  V(\omega)^\perp = W_2(\omega) \oplus \cdots  \oplus W_d(\omega)\,.
\end{equation}
Therefore, since $\Phi(t,\omega) U(\omega) = U(\theta \omega)$ we have
almost surely
\begin{equation}
  \norme{\frac{\phi(t,\omega)_{\mid V(\omega)^\perp}}{q(t,\omega)}}
  \le \unsur{L} e^{-\alpha t} \quad (t\ge 0).
\end{equation}
And similarly, almost surely
\begin{equation}
  \norme{\frac{\phi*(t,\omega)_{\mid U(\omega)^\perp}}{q(t,\omega)}}
  \le \unsur{L} e^{-\alpha t} \quad (t\ge 0).
\end{equation}

Without loss in generality we can assume that $y_0=\vun$ so that $y(t) =
\phi(t,\omega) \vun$. We let
\begin{equation*}
  S(\omega) = \crochet{\vun,V(\omega)}\,.
\end{equation*}
On the one hand,from the decomposition
\begin{equation*}
  \vun = S(\omega) U(\omega) + R(\omega)\,,
\end{equation*}
with $R(\omega) \in V(\omega)^\perp$, we infer that
\begin{equation}
  \frac{y(t)}{q(t,\omega)} = \frac{\phi(t,\omega) \vun}{q(t,\omega)} =
  S(\omega) U(\theta_t \omega) +
  \gamma(t,\omega)\,,\quad\text{with}\quad \gamma_t =  \frac{\phi(t,\omega) R(\omega)}{q(t,\omega)}\,,\label{eq:decphi}
\end{equation}
such that
\begin{equation}
  \norme{\gamma(t,\omega)} \le \unsur{L} e^{-\alpha t}
  \norme{R(\omega)} \le C e^{-\alpha t} (1+S(\omega))\,.
\end{equation}
(We have used $\norme{U}_1 = \crochet{U,\vun}=1$).
Consequently, almost surely
\begin{equation}
  \lim_{t\to +\infty} \frac{\crochet{y(t),\vun}}{q(t,\omega)} =
  S(\omega)\,.
\end{equation}

On the other hand, from the decomposition
\begin{equation}
  \vun = V(\theta_t \omega) + R^*(\theta_t \omega)
\end{equation}
with $R^*(\theta_t \omega) \in U(\theta_t \omega)^\perp$, we infer
that for $0\le q \le t$,
\begin{equation}
  \frac{\phi^*(t-s,\theta_s \omega) \vun}{q(t-s,\theta_s\omega)} =
  V(\theta_s \omega ) + \gamma^*(s,t,\omega)\,\quad\text{with}\quad
  \gamma^*(s,t,\omega)= \frac{\phi^*(t-s,\theta_s \omega) R^*(\theta_t\omega)}{q(t-s,\theta_s\omega)}\,,\label{eq:decphistar}
\end{equation}
and thus
\begin{equation}
  \norme{\gamma^*(s,t,\omega)} \le \unsur{L} e^{-\alpha(t-s)}
  \norme{R^*(\theta_t \omega)} \le C e^{-\alpha(t-s)} S(\theta_t\omega)\,.
\end{equation}

Our starting point for the limit is the integral formula, a direct
consequence of the linear differential system,
\begin{equation}
  \partial_zy(t) = \intot \phi(t-s,\theta_s \omega) \partial_z
  A(z,\omega_s) y(s), ds\,.
\end{equation}
Since $y(s) = \phi(s,\omega)\vun$ and $q(t,\omega) =
q(t-s,\theta_s\omega) q(s,\omega)$, we obtain, inserting equations
\eqref{eq:decphi} and \eqref{eq:decphistar},
\begin{align}
  \unsur{t} \frac{\crochet{\partial_z y(t),\vun}}{q(t,\omega)} &=
      \unsur{t} \intot \crochet{\partial_z A(z,\omega_s)
        \frac{\phi(s,\omega)\vun}{q(s,\omega)},
        \frac{\phi^*(t-s,\theta_s\omega)\vun}{q(t-s,\theta_s\omega)}}\,
        ds \\
        &= \unsur{t} \intot \crochet{\partial_z
          A(z,\omega_s)(S(\omega) U(\theta_s\omega) +
          \gamma(s,\omega)),V(\theta_s\omega) +
          \gamma^*(s,t,\omega)}\, ds \\
        &= T_{11}(t,\omega) + T_{12}(t,\omega) + T_{21}(t,\omega) + T_{22}(t,\omega)\,,
      \end{align}
      
      with
      \begin{align}
T_{11}(t,\omega) &= S(\omega)     \unsur{t} \intot \crochet{\partial_z
          A(z,\omega_s) U(\theta_s\omega) ,V(\theta_s\omega) }\, ds
        \\
  T_{12}(t,\omega) &= S(\omega)     \unsur{t} \intot \crochet{\partial_z
          A(z,\omega_s) U(\theta_s\omega) ,\gamma^*(s,t,\omega) }\, ds
        \\
        T_{21}(t,\omega) &=  \unsur{t} \intot \crochet{\partial_z
          A(z,\omega_s) \gamma(s,\omega) ,V(\theta_s\omega) }\, ds  \\
        T_{22}(t,\omega) &=     \unsur{t} \intot \crochet{\partial_z
          A(z,\omega_s) \gamma(s,\omega) ,\gamma^*(s,t,\omega) }\, ds\,.
      \end{align}
      We shall show that almost surely $T_{11}(t,\omega) \to S(\omega)
      \Lambda'(z)$ and that for $(i,j)\neq(1,1)$, $T_{ij}(\omega,t_n)
      \to 0$.
      To this end, we repeatedly use Cauchy Schwarz inequality and the fact that
      \begin{equation}
        \norme{U(\omega)}_2 \le C \norme{U(\omega)}_1 = C
        \crochet{U(\omega),\vun}= C\,, \quad
        \norme{V(\omega)}_2 \le C \norme{V(\omega)}_1 = C S(\omega)\,.
      \end{equation}

Combining Lemmas \ref{lem:statdelta} and \ref{lem:intvomega}, we
obtain that almost surely
\begin{equation}
  \lim_{n\to +\infty} \unsur{t_n}S(\theta_{t_n}\omega) = 0\,.
\end{equation}
Since $\sup_{s} \norme{\partial_z A(z,s)} < +\infty$, we have that
\begin{equation}
  \valabs{T_{12}(t,\omega)} \le C S(\omega) S(\theta_t \omega)
  \unsur{t} \intot
  e^{-\alpha (t-s)} ds \le C' S(\omega) \unsur{t} S(\theta_t \omega)\,.
\end{equation}
Therefore, almost surely $\unsur{t_n} T_{21}(t_n,\omega) \to 0$.

Similarly, we have
\begin{equation}
  \valabs{T_{21}(t,\omega)} \le C (1+S(\omega)) \unsur{t} \intot
  e^{-\alpha s} S(\theta_s \omega)\,ds\,.
\end{equation}
Since $\esp{\Gamma_z^{-(1+\delta)}}< +\infty$,
Lemma~\ref{lem:intvomega} implies that $\esp{S^{(1+\delta)}}< \infty$,
and thus the ergodic theorem implies that, almost surely
\begin{equation}
  \lim_{t\to +\infty} \unsur{t} \intot S(\theta_s
  \omega)^{1+\delta}\,ds = \esp{S^{1+\delta}}\,.
\end{equation}
Now, Lemma \ref{lem:blob} implies that almost surely $\lim_{t\to
  +\infty} T_{21}(t,\omega)=0$.
\end{proof}
We have,
\begin{equation}
  \valabs{T_{22}(t\omega)} \le C (1+S(\omega)) e^{-\alpha t} \unsur{t}
  \intot S(\theta_s \omega)\, ds\,.
\end{equation}
By the ergodic theorem, almost surely
\begin{equation}
  \lim_{t\to +\infty} \unsur{t}  \intot S(\theta_s \omega)\, ds=\esp{S}\,,
\end{equation}
so we obtain that almost surely $\lim_{t\to
  +\infty} T_{22}(t,\omega)=0$.

Finally, the random variable $X(\omega) =
\crochet{\partial_zA(z,\omega_0) V(\omega),U(\omega)}$ is in
$L^1(\PP)$ since
$\valabs{X(\omega)} \le C S(\omega)$. Therefore, the ergodic theorem
implies that almost surely
\begin{equation}
  T_{11}(t,\omega) = S(\omega) \unsur{t} X(\theta_s\omega)\, ds \to
  S(\omega) \esp{X(\omega)} = S(\omega) \Lambda'(z)\,.
\end{equation}
\begin{lemma}\label{lem:statdelta}
  Let $X,(X_t,t\ge0)$ be a family of non negative random variables such
  that for any $t\ge 0$, $X_t$ is distributed as $X$, and such that for
  a $\delta>1$, $\esp{X^\delta}< +\infty$. Then if $(t_n)_{n\ge 1}$ is
  a sequence of real numbers such that $t_n \ge n^{1/\delta}$, the
  sequence $\unsur{t_n} X_{t_n}$ converges almost surely to $0$.
\end{lemma}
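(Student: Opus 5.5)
The plan is a Borel--Cantelli argument built on Markov's inequality applied to the $\delta$-th moment. Fix $\varepsilon>0$. Since every $X_{t_n}$ has the same law as $X$, Markov's inequality gives
\begin{equation*}
  \prob{\tfrac{1}{t_n} X_{t_n} > \varepsilon} = \prob{X > \varepsilon t_n}
  = \prob{X^\delta > \varepsilon^\delta t_n^\delta}
  \le \prob{X^\delta > \varepsilon^\delta n}\,,
\end{equation*}
where the last step uses $t_n^\delta \ge n$. It is better not to bound this term by term with $\esp{X^\delta}/(\varepsilon^\delta n)$, because the resulting harmonic series diverges. The key point is to keep the tail probabilities as they are and sum them.

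For a non-negative random variable $Y$ we have the standard layer-cake bound
\begin{equation*}
  \sum_{n\ge 1}\prob{Y > n} \le \int_0^\infty \prob{Y>u}\,du = \esp{Y}\,.
\end{equation*}
Taking $Y = X^\delta/\varepsilon^\delta$ gives
\begin{equation*}
  \sum_{n\ge1}\prob{\tfrac{1}{t_n} X_{t_n} > \varepsilon} \le \varepsilon^{-\delta}\esp{X^\delta} < +\infty\,.
\end{equation*}
The Borel--Cantelli lemma then says that almost surely $\tfrac{1}{t_n}X_{t_n}\le\varepsilon$ for all $n$ large enough. Letting $\varepsilon$ run through $\{1/k : k\ge1\}$ and intersecting these countably many full-measure events shows that $\tfrac1{t_n}X_{t_n}\to0$ almost surely. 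Only the equality of the marginal laws is used here; no independence between the $X_{t_n}$ is needed.

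The only step that needs care is replacing the naive Markov bound with the summed-tail estimate, which is exactly why the hypothesis $t_n\ge n^{1/\delta}$ is calibrated to the $\delta$-th moment. The argument in fact works for any $\delta>0$, so the restriction $\delta>1$ in the statement is not needed for this lemma. It matters only when the lemma is applied in the proof of Theorem~\ref{thm:an}, with exponent $1+\delta$ and $X_t = S(\theta_t\omega)$, whose moment is supplied by Lemma~\ref{lem:intvomega}.
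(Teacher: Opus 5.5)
Your proof is correct and is essentially the paper's argument: Borel--Cantelli applied to $\sum_n \prob{X \ge \varepsilon t_n} \le \sum_n \prob{X^\delta \ge \varepsilon^\delta n} \le \varepsilon^{-\delta}\esp{X^\delta}$. The paper obtains the last bound by Fubini on $\esp{\sum_n \un{n \le (X/\varepsilon)^\delta}}$, which is the same as your layer-cake estimate. You are also right that the argument works for every $\delta>0$, so the hypothesis $\delta>1$ is not needed for the lemma itself; in the proof of Theorem~\ref{thm:an} the exponent is $1+\delta>1$ anyway, so that restriction plays no essential role there either.
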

\begin{proof}
  By Borel Cantelli's Lemma, it suffices to show that for every
  $\ee>0$ we have
  \begin{equation}
    \sum_n \prob{\unsur{t_n} X_{t_n}\ge \ee} < +\infty\,.
  \end{equation}
  This is a consequence of Fubini's theorem and the integrability
  assumption on $X$:
  \begin{align*}
  \sum_{n\ge 1} \prob{\unsur{t_n} X_{t_n}\ge \ee} &=   \sum_{n\ge 1} \prob{X\ge t_n
    \ee} \\
  &=\esp{\sum_{n\ge 1} \un{t_n \le \frac{X}{\ee}}} \\
  &\le.\esp{\sum_{n\ge 1} \un{n\le (\frac{X}{\ee})^\delta}}\\
  &\le \esp{\etp{\frac{X}{\ee}}^\delta} < +\infty\,.
  \end{align*}
\end{proof}

\begin{lemma}
  \label{lem:blob}
  Let $f:\R_+\to \R_+$ measurable such that for some $\ee>0$, the
  following limit exist and is finite
  \begin{equation}
    \lim_{t\to +\infty} \unsur{t} \intot f(s)^{1+\ee}\, ds =: m \in
    \R_+\,.
  \end{equation}

  Then, for any $\alpha>0$,
  \begin{equation}
    \lim_{t\to +\infty} \unsur{t} \intot e^{-\alpha s} f(s)\, ds = 0\,.
  \end{equation}
\end{lemma}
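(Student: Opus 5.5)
Since $f\ge 0$ and $e^{-\alpha s}\le 1$, the integral $\intot e^{-\alpha s} f(s)\,ds$ is nondecreasing in $t$. The plan is therefore to show that it stays bounded as $t\to+\infty$. Once $\int_0^\infty e^{-\alpha s} f(s)\,ds<+\infty$, dividing by $t$ gives the limit $0$ immediately. So everything reduces to proving integrability of $e^{-\alpha s}f(s)$ on $\R_+$ from the Cesàro-type growth assumption.

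The first step is to apply Hölder's inequality with exponents $p=1+\ee$ and $p'=(1+\ee)/\ee$ on the interval $[0,t]$. The key is to split the exponential weight so that each factor can be controlled, for instance $e^{-\alpha s}=e^{-\alpha s/p}e^{-\alpha s/p'}$. This gives
\begin{equation*}
  \intot e^{-\alpha s} f(s)\,ds \le \etp{\intot e^{-\alpha s} f(s)^{1+\ee}\,ds}^{1/p}\etp{\intot e^{-\alpha s}\,ds}^{1/p'}.
\end{equation*}
The second factor is bounded by $\alpha^{-1/p'}$, so it remains to bound $\intot e^{-\alpha s} f(s)^{1+\ee}\,ds$ uniformly in $t$.

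For that, set $F(t)=\intot f(s)^{1+\ee}\,ds$. The hypothesis that $F(t)/t\to m$ gives $F(t)\le (m+1)t$ for $t\ge t_0$. Since $F$ is finite for large $t$ and nondecreasing, it is finite everywhere, hence $F(t)\le C(1+t)$ for all $t$. An integration by parts (a Stieltjes integral, legitimate because $F$ is absolutely continuous) then yields
\begin{equation*}
  \intot e^{-\alpha s}\,dF(s) = e^{-\alpha t}F(t) + \alpha\intot e^{-\alpha s}F(s)\,ds \le C(1+t)e^{-\alpha t} + \alpha C\int_0^\infty e^{-\alpha s}(1+s)\,ds,
\end{equation*}
which is bounded uniformly in $t$. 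Plugging this into the Hölder estimate shows that $\int_0^\infty e^{-\alpha s}f(s)\,ds<\infty$, and the conclusion follows.

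The only delicate point is the Abel summation step. An alternative that avoids it is to decompose $\R_+$ into the blocks $[k,k+1]$, bound $\int_k^{k+1}f^{1+\ee}\le F(k+1)\le C(k+2)$, and sum against the weights $e^{-\alpha k}$. Either route is routine. In fact the exponent $\ee$ is not needed: the same argument applied directly to $f$ only requires $\unsur{t}\intot f(s)\,ds$ to be bounded, which follows from the hypothesis via Hölder or Jensen, since $\unsur{t}\intot f\le \etp{\unsur{t}\intot f^{1+\ee}}^{1/(1+\ee)}$. So I would present the short version: bound $F_1(t)=\intot f(s)\,ds\le C(1+t)$ by Jensen, then use the Abel summation with $dF_1$ to get $\int_0^\infty e^{-\alpha s}f(s)\,ds<\infty$, and divide by $t$.
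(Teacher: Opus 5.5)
Your proof is correct, but it takes a different route from the paper. The paper truncates at a level $A>0$. On $\{f\le A\}$ the exponential weight gives $\frac{1}{t}\int_0^t e^{-\alpha s} f(s)\mathbf{1}_{\{f(s)\le A\}}\,ds\le \frac{A}{\alpha t}$. On $\{f\ge A\}$ it simply drops $e^{-\alpha s}$ and uses $f\le A^{-\epsilon}f^{1+\epsilon}$, so the $\limsup$ is at most $A^{-\epsilon}m$; then it lets $A\to+\infty$. This is a uniform-integrability argument: $\epsilon>0$ is essential to it, and it yields only $o(1)$ with no rate.

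You instead use the exponential weight on the whole range through Abel summation, and prove the stronger statement $\int_0^\infty e^{-\alpha s}f(s)\,ds<+\infty$. You could equally use Tonelli, $\int_0^\infty e^{-\alpha s}f(s)\,ds=\alpha\int_0^\infty e^{-\alpha u}\bigl(\int_0^u f(s)\,ds\bigr)\,du$, which avoids the boundary term and any appeal to absolute continuity. Your version gives an $O(1/t)$ rate and needs only $\sup_{t\ge 1}\frac{1}{t}\int_0^t f(s)\,ds<+\infty$. So your remark that $\epsilon$ is superfluous is right. The paper's argument is slightly shorter, but as written it would break down at $\epsilon=0$.

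This also matters for how the paper applies the lemma to $T_{21}$ in the proof of Theorem~\ref{thm:an}. Your version shows that $\mathbb{E}[S]<+\infty$ already suffices for that term: by Tonelli and stationarity, $\mathbb{E}\bigl[\int_0^\infty e^{-\alpha s}S(\theta_s\omega)\,ds\bigr]=\mathbb{E}[S]/\alpha$. The higher moment is still used for $T_{12}$ through Lemma~\ref{lem:statdelta}.
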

\begin{proof}
  Let $A>0$. Then, as $t\to +\infty$,
  \begin{equation}
    \unsur{t} \intot e^{-\alpha s} f(s) \un{f(s) \le A} \, ds\le
    \frac{A}{t} \intof e^{-\alpha s} \, ds =
    \frac{A}{t\alpha}\to 0.
  \end{equation}
  
  Observe that
  
  \begin{align*}
    \unsur{t} \intot e^{-\alpha s} f(s) \un{f(s) \ge A} \, ds   &\le
    \unsur{t} \intot  f(s) \un{f(s) \ge A} \, ds \\
  &\le \unsur{t} \intot  f(s) \etp{\frac{f(s)}{A}}^\ee \, ds\\
    &\le A^{-\ee}  \unsur{t} \intot f(s)^{1+\ee}\, ds \to A^{-\ee} m\,.
  \end{align*}
  
  Therefore, for any $A>0$,
  \begin{equation}
    \limsup_{t\to +\infty} \unsur{t} \intot e^{-\alpha s} f(s)\, ds
    \le  A^{-\ee} m\,.
  \end{equation}
  We conclude by letting $A\to +\infty$.
\end{proof}

\section{Application to the Calculation of the Selection Gradient}
The purpose of this section is to show how the calculation of the selection gradient, which is the derivative of the leading Lyapunov exponent, allows the analysis of the invasion of a mutant in a resident population at equilibrium (under its stationary measure).

For a more rigorous and precise introduction to the biological
problem, we can only recommend reading introductory articles on
adaptive dynamics
(\url{https://en.wikipedia.org/wiki/Adaptive_dynamics},
\citet{MetGer96}) and invasion analysis (\citet{FerGat95}).

Let us assume that a resident population with trait $z_w$ evolves in a
random environment $(\xi_t)_{t\ge 0}$ and is described by the process
$(X_w(t))_{t\ge 0}$. The central question of invasion biology is wether
the introduction of a \emph{small number of individuals of a mutant species},
with trait $z_m\neq z_w$, can lead to its becoming established.
Thanks to the large population approximations for density dependent
Markov processes of \citet{kurtz78}, we can model the evolution of the
resident population as a system of ordinary differential equations
with random coefficients.
However, the large populations approximations are no longer
appropriate to model  the mutant population (introduced in small numbers).
A natural process approximation, considered e.g. by
\citet{Whittle13,Kendall8} is the branching process in a random
environment.
For example \cite{BallDonelly2} proved for a quite general population process
that if $N$ denotes the typical number of resident susceptibles, the
mutant epidemic process is well approximated up to time $o(\log N)$ by
a branching process.

\bigskip
We shall therefore assume that the evolution of the mutant process is
modeled by a multi-type branching process $(X_m(t))_{t\ge 0}$  in a
random environment $(\omega_t)_{t\ge 0}$.
Observe that the environment $\omega_t=(\xi(t),X_w(t))_{t\ge 0}$ of the mutant branching
process is made by the combination of the original random environment
with the process of the resident population.

Thanks to the now classic results on branching processes (see the
paper by \citet{Lam25}, Remark 2.1 and references therein), the
evolution of the branching process is dictated by  the dynamics of the
vector of means $(Y(t) = \mathbb{E}'\etc{X_m(t, \omega')})_{t \geq 0}$
which is the solution of the differential equation with random coefficients
\begin{equation}
  \frac{dY}{dt} = A(\omega_t) Y(t).
\end{equation}
(In \citet{athmul68}, the matrix $A(\omega_t)$ is called the
infinitesimal generator of the semigroup of mean matrices.)

We assume that we have \emph{weak selection}, that is that the mutant
trait $z_w$ is a small variation of the resident trait, and that we
satisfy Hypothesis \ref{hyp:un}.
The leading Lyapunov exponent $\Lambda(z)$ exists and is a
continuously differentiable function on the interval $I$.

\begin{definition}
The \emph{selection gradient} is defined by $\Lambda'(z_w)$: it determines the direction of selection when the resident's trait is $z_w$.
\end{definition}
Indeed
let us first observe that $\Lambda(z_w) = 0$. In biological terms,
this means that a resident at equilibrium cannot
self-invade. Mathematically, since the process $(\omega_t)_{t\ge 0} = (X_w(t), \xi(t)))_{t \geq 0}$ is stationary, there cannot be exponential growth or decay due to a small variation in the resident population.

Assume  for example, that  $\Lambda'(z_w)>0$ for $z_w$ in the interior of
 $I$.  Then for $\ee$ small enough,
 if  $z_m \in (z_w,z_w+\ee)$, we have $\Lambda(z_m)>0$, and thus the
 mutant branching process $X_m$ is supercritical and  has a positive probability of survival. On the
 other hand, for $\ee$ small enough, if $z_m \in (z_w-\ee,z_w)$ then
 $\Lambda(z_m)<0$ and the branching process $X_m$ is subcritical and
 thus almost surely goes extinct.

 \subsection{Example}
 Let us consider the Lysis/Lysogeny Model in a Fluctuating Environment
of  \citet{BruManSmiCarGanWed2026}.

The densities of susceptible cells $S(t)$, lysogenized cells $L(t)$, and free virus particles $V(t)$ satisfy the differential equation
\begin{align}
  \frac{dS}{dt} &= \theta + r S (1 - \kappa N) - (aV + d_s) S, \\
  \frac{dL}{dt} &= r L (1 - \kappa N) + a \phi V S - (\alpha + d_L) L, \\
  \frac{dV}{dt} &= a (1 - \phi) B V S + \alpha B L - (aN + d_V) V,
\end{align}
with $N(t) = S(t) + L(t)$ the total density of cells. The random
environment is $(\theta(t))_{t \geq 0}$, a uniquely ergodic Markov
process with values in $[0,\theta_{max}]$ (which has a unique invariant probability measure). Consider as a trait the rate $z = \alpha$
of reactivation of lysogenized cells.

The random environment is thus $\omega_t=(\theta(t),S(t),N(t))_{t\ge
  0}$. Since $N(t)$ is solution of
\begin{equation}
  \frac{dN}{dt} \le rN(1-\kappa N) -  \min(d_S,\alpha +d_L)\, N \le r
  N (1-\frac{N}{K})\,,
\end{equation}
$\omega_t$ takes its value in the compact $\Srond=\ens{\tau=(\theta,S,N) : \theta \in
  [0,\theta_{max}], 0\le S\le N\le K}$. We assume that the ergodicity
of $(\theta(t))_{t\ge 0}$ ensures the ergodicity of $(\omega_t)_{t\ge 0}$.

The branching approximation of the mutant population is a Markov process $X_m(t)=(L_m(t),V_m(t))$ with a time-dependent and environment-dependent generator: if $x = (l, v)$, then
\begin{align}
  L^{X_m} f(x) &= (l r(1 - \kappa N(t)+ v a\phi S(t)) (f(x + e_L) - f(x))\\
  &+ l (\alpha + d_L) (f(x-e_L) -f(x))\\
  &+B(l \alpha + v a (1-\phi) S(t))  (f(x + e_V) - f(x)) \\
  &+(aN(t) + d_V) v (f(x-e_V) -f(x))\,,
\end{align}

with $e_L = (1, 0)$ and $e_V = (0, 1)$. The dynamics of the means is therefore given by
\begin{equation}\label{eq:meandyn}
  \frac{dY}{dt} = A(\alpha,\omega_t) Y(t),
\end{equation}
with $\tau=(\theta,S,N)$
\begin{equation}
  A(\alpha, \tau) = \begin{pmatrix}
    r(1 - \kappa N) - (\alpha + d_L) &\alpha B \\
    a \phi S & \etc{a (1 - \phi) B S - (a N + d_V)}
  \end{pmatrix}^T.
\end{equation}
The mean matrix is obtained from the preceding formula by replacing
$S$ by the mean $\esp{S(0)}>0$ and thus is seen to be irreducible. We
numerically \emph{check} by a Monte Carlo simulation that
$\esp{\Gamma^{-2}}< +\infty$, with $\Gamma=\inf_{t\in [0,1]} S(t)$. We
can apply Theorem~\ref{thm:main}.
Since 
\begin{equation}
  \frac{\partial A(\alpha, \tau)}{\partial \alpha} = \begin{pmatrix}
    -1 & B \\ 0 & 0
  \end{pmatrix}^T
\end{equation}
 we can compute the selection gradient
\begin{equation}
  \Lambda'(\alpha) = \esp{\crochet{V(\omega), \frac{\partial A(\alpha,
        \omega_0)}{\partial \alpha} U(\omega)}}=\esp{U_L(B V_V -V_L)}\,.
\end{equation}
If we assume integral separation for the dynamics \eqref{eq:meandyn},
we obtain a numerical approximation of this derivative
by solving the extended differential equation that gives the dynamics of $(Y(t), \partial_\alpha Y(t))$:
\begin{align*}
  \frac{dY}{dt} &= A(\omega_t) Y_t, \\
  \frac{d \partial_\alpha Y}{dt} &= A(\omega_t) \partial_\alpha Y(t) + \frac{\partial A(\alpha, \omega_t)}{\partial \alpha} Y(t).
\end{align*}
and computing the limit
\begin{equation}\label{eq:411}
  \Lambda'(\alpha) = \lim_{n \to +\infty} \frac{1}{t_n} \frac{\crochet{\partial_\alpha Y(t_n), \vun}}{\crochet{Y(t_n), \vun}}\,,
\end{equation}
with $t_n=n^{1/2}$.

\smallskip
The figure \ref{fig:lyse} shows the convergence of \eqref{eq:411} for
  5 different trajectories. The process $\theta(t)$ is a continuous
  time Markov chain with values in $\ens{0,\theta_m}$ and with
  invariant probability $(\pi_0,1-\pi_0)$.

  \begin{figure}
    \centering
    \begin{subfigure}{0.45\textwidth}
    \includegraphics[width=\textwidth]{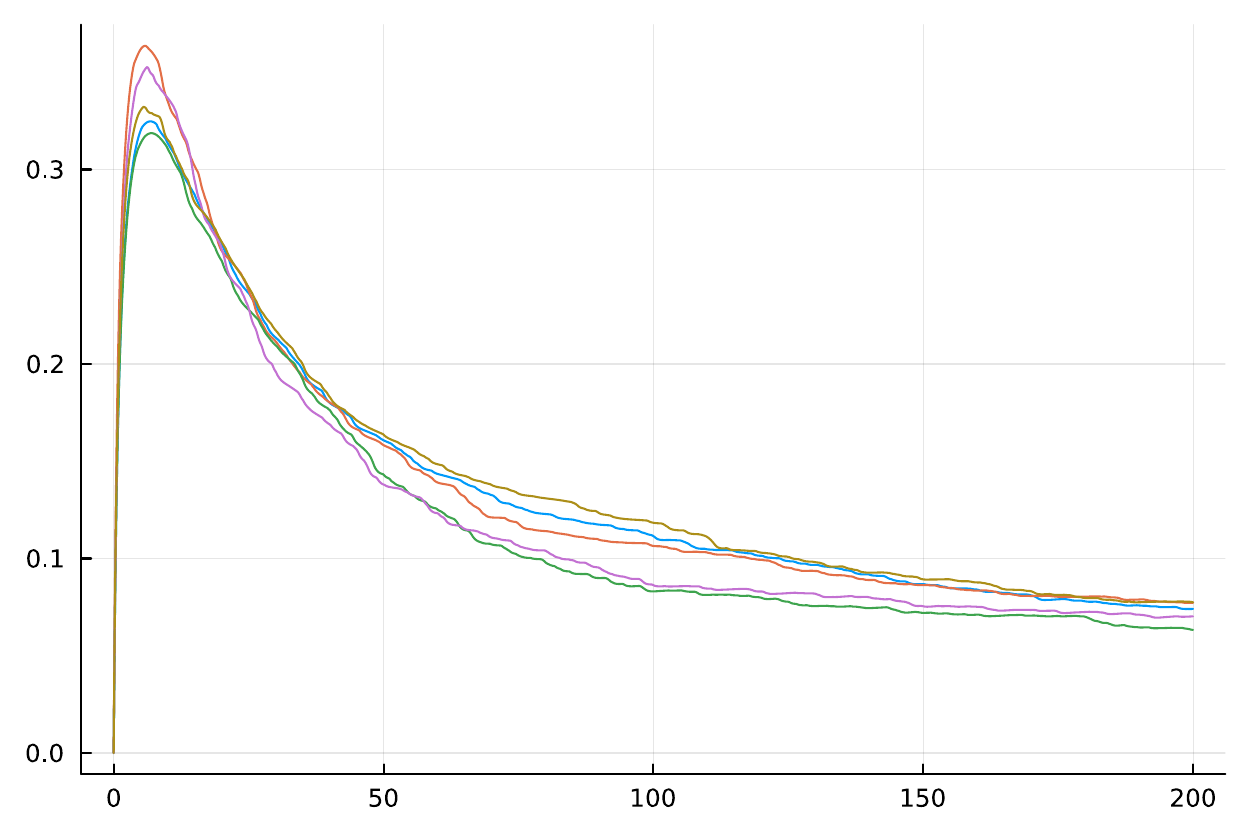} 
    \caption{}
    \label{fig:first}
\end{subfigure}
\hfill
\begin{subfigure}{0.45\textwidth}
    \includegraphics[width=\textwidth]{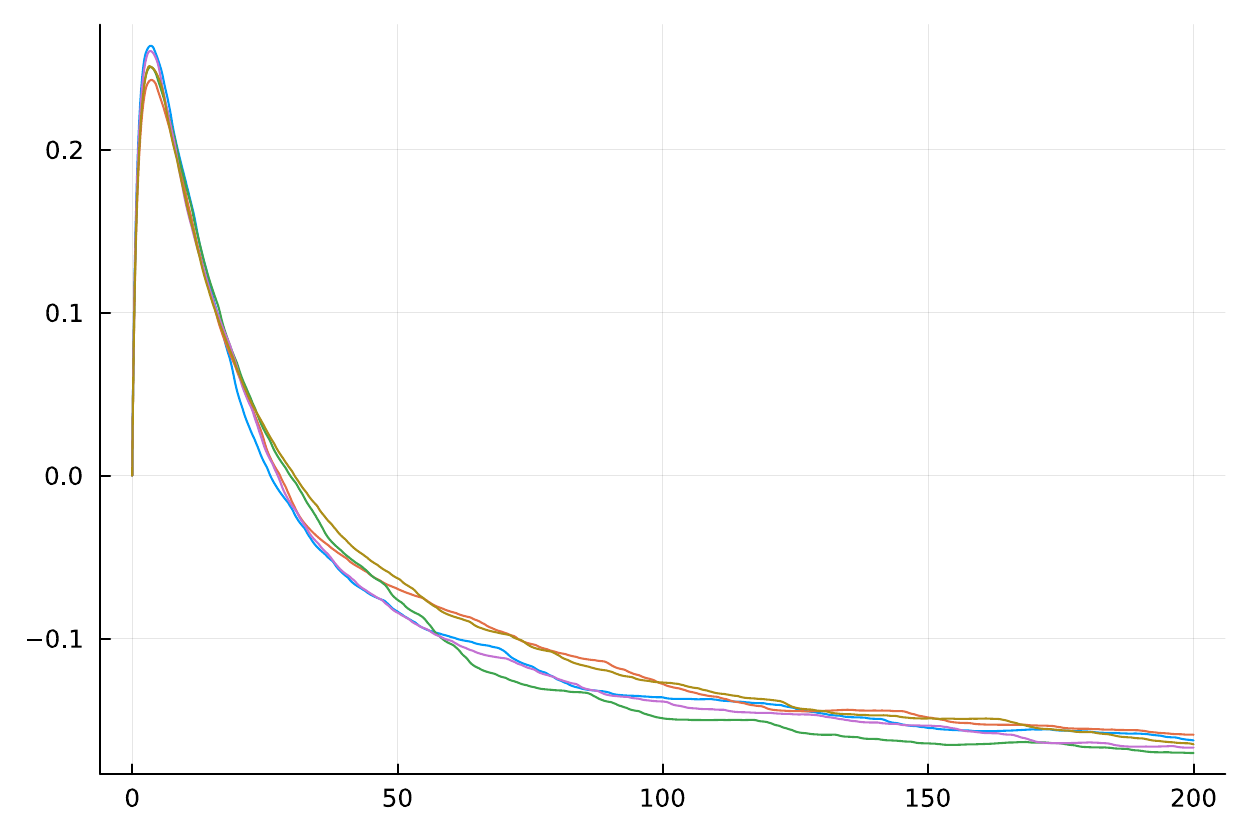}
    \caption{}
    \label{fig:second}
\end{subfigure}
    \caption{Computation of the selection gradient. We have 5
      different trajectories of the curve $t\to \frac{1}{t} \frac{\crochet{\partial_\alpha Y(t), \vun}}{\crochet{Y(t), \vun}} $. On the left graph, $\alpha=0.05$ and
      $\Lambda'(\alpha)>0$, on the right graph $\alpha=0.1$ and
      $\Lambda'(\alpha)<0$. Parameters  are taken from Table 1 of \citet{BruManSmiCarGanWed2026} : $\pi_0=0.2$, $\theta_m=250$,
      $r=1$, $\kappa=10^{-3}$, $a=10^{-4}$, $d_S=d_L=d_V=0.1$, $B=10$, $\phi=0.325$.}
    \label{fig:lyse}
  \end{figure}
\xcom{
\subsection{Two Examples of Explicit Calculation of the Leading Lyapunov Exponent in a Random Environment}
A first example is \citet{CaiGer2020}. We can see from the equation describing the model (2.1a) and its application to a structured population model in a random environment in section 4.3, that the model is diagonalizable (see equation (4.6) which describes the evolution of $I_{i, t}$, the size of the population infected by virus $x_i$) and therefore one-dimensional. In this case, we have an explicit calculation, in a very general framework, of the growth rate as a function of the trait $x$, and even in equation (4.9) of the invasion fitness $\Srond_x(y)$ of a mutant initially rare with trait $y$ in a resident population with trait $x$, from which we can immediately deduce the selection gradient $S_x = \frac{d \Srond_x(y)}{dy}_{\mid y = x}$.

A second example is \citet{BlaHerSlo2021}, where the authors explicitly compute the leading Lyapunov exponent for 2-dimensional branching processes in a random environment (see Theorems 2.12 to 2.15). They deduce a \emph{fitness advantage under fair comparison} that allows them to compare different strategies. Note that these calculations allow comparing the growth rates of different strategies, but they do not allow computing the growth rate of a strategy of type $y$ mutant initially rare, in a resident environment of type $x$, and therefore not the selection gradient.
}
\section{The integral formula for the derivative in the periodic case}
Here, $S = \etc{0, 1} = \R/\Z$ and under $\PP_s$, we have $\omega(t) = t + s \mod (1)$. This is a uniquely ergodic Feller process with invariant measure $\mu$, the uniform measure on $S$, and thus for the process $(\omega_t)_{t \geq 0}$ to be stationary ergodic, we must consider the probability measure
\begin{equation}
  \PP(A) = \int_0^1 \PP_s(A) \, ds.
\end{equation}
Assume that $A:I\times \R \to \Mrond$ is continous, ith $I=[a,b]$,
$a<b$ and 1-periodic : $A(z,1+s)=A(z,s)$ such that its restriction
$A_{\mid S}:I\times S \to \Mrond$ statisfies Hypothesis \ref{hyp:un}.

We denote by $\phi_A$ the fundamental solution associated with the matrix function $A$:
\begin{equation}
  \frac{d \phi_A(t)}{dt} = A(t) \phi_A(t), \quad \phi_A(0) = I_d.
\end{equation}
Under $\PP_s$, we have $A(\omega_t) = A(t + s)$ so that
\begin{equation}
  \frac{d}{dt} \Phi(t, \omega) = A(t + s) \Phi(t, \omega),
\end{equation}
that is, under $\PP_s$, we have
\begin{equation}
  \Phi(t, \omega) = \phi_{A(s + \cdot)}(t) = \phi_A(t + s) \phi_A(s)^{-1}.
\end{equation}
We fix $ z \in I$ and we shall omit the subscript for
readability. According to Theorem \ref{thm:main}
there exist two strictly positive random column vectors $U(\omega)$ and row vectors $V(\omega)$ such that $\crochet{\vun, U(\omega)} = \crochet{V(\omega), U(\omega)} = 1$ and
\begin{equation}
  \Phi(t, \omega) U(\omega) = q_t(\omega) U(\theta_t \omega) \quad \PP \, \text{a.s.}
\end{equation}
Consequently, for almost all $s$, under $\PP_s$,
\begin{equation}
  \phi_A(t + s) \phi_A(s)^{-1} U(\omega) = q_t(\omega) U(\theta_t \omega).
\end{equation}
By the continuity of the flow $\phi_A(t)$ and thus that of $t \to q_t(\omega)$, this identity holds for all $s \in [0, 1]$. We will specialize it to $t = 1$. Since $A(t + 1) = A(t)$, we have $\phi_A(1 + s) = \phi_A(s) \phi_A(1)$. Moreover, $\theta_1 \omega = \omega$, so for all $s$ under $\PP_s$,
\begin{equation}
  \phi_A(s) \phi_A(1) \phi_A(s)^{-1} U(\omega) = q_1(\omega) U(\omega).
\end{equation}
We consider the Perron-Frobenius eigenvectors associated with the
positive matrix $\phi_A(1)$: for $\lambda = \rho(\phi_A(1))$ (the
spectral radius), there exist \emph{unique}strictly positive vectors $u_0, v_0$ such that $\crochet{u_0,\vun}=\crochet{u_0,v_0}=1$ and
\begin{equation}
  \phi_A(1) u_0 = \lambda u_0, \quad \text{and} \quad  \phi_A(1)^* v_0 = \lambda v_0.
\end{equation}
By uniqueness, we therefore obtain that under $\PP_s$, a.s., $q_1(\omega) = \lambda$ and for a constant $c > 0$, $\phi_A(s)^{-1} U(\omega) = cu_0$. This implies that
\begin{equation}
  U(\omega) = \frac{\phi_A(s) u_0}{\valabs{\phi_A(s) u_0}}.
\end{equation}

Similarly, from the equation
\begin{equation}
  \Phi^*(t,\omega)V(\theta_t \omega)  = q_t(\omega) V(\omega) \quad \PP_\mu \, \text{a.s.},
\end{equation}
we deduce by taking $t = 1$ that for all $s$ under $\PP_s$, almost surely,
\begin{equation}
  \phi_A^*(s)^{-1} \phi_A^*(1) \phi_A^*(s) V(\omega) = q_1(\omega) V(\omega),
\end{equation}
and thus that for a constant $c > 0$, we have $V(\omega) = c v_0
\phi_A^*(s)^{-1} v_0$. Since $1 = V(\omega) U(\omega)$, it follows that
\begin{equation}
  V(\omega) = \valabs{\phi_A(s) u_0} \phi_A^*(s)^{-1} v_0 \quad \PP_s \, \text{a.s.}
\end{equation}
The function $u(s) = \phi_A(s) u_0$ is a positive $1$-periodic solution of
\begin{equation*}
  \frac{du}{ds} = A(s) u(s).
\end{equation*}
  The function $v(s) = \phi_A^*(s)^{-1} v_0$ is a positive
  $1$-periodic solution of 
  \begin{equation*}
    \frac{dv}{ds} = -A^*(s) v(s)
  \end{equation*}
  and we have $\crochet{u(s),v(s)}=1$ for all $s$. 
Consequently, the selection gradient is 
\begin{align*}
  \Lambda'(z) &= \esperance{}{\crochet{V(\omega), \partial_z A(\omega_0) U(\omega)}} \\
  &= \int_0^1 \esperance{s}{\crochet{V(\omega), \partial_z A(\omega_0) U(\omega)}} \, ds \\
  &= \int_0^1 \crochet{v(s), \partial_z A(s) u(s)} \, ds.
\end{align*}
This is indeed the formula for the selection gradient in a periodic environment of \citet{LioGan2022}.

\xcom{
\section{An Alternative Proof of the Formula for the Derivative}
This alternative proof is based on the very beautiful paper by \citet{Ruelle79}. In the simplified framework where $\Prond$ is the open set of measurable functions $T: \Omega \to \Mrond_m(\R)$ such that
\begin{equation}
  T(\omega)(\rdp) \subset \ens{0} \cup \rdpp,
\end{equation}
Theorem 3.1 of \citet{Ruelle79} establishes that the mapping $T \to \Xrond(T)$, which assigns to $T$ its leading Lyapunov exponent, is real analytic on $\Prond$. Moreover, if the invariant subspaces of $T$ and $T^*$ are generated by the random variables $U(\omega), V(\omega)$ of $\rdpp$, we have the following formula for the differential (formula (3.6) of \citet{Ruelle79}):
\begin{equation}\label{eq:diffruelle}
  D\Xrond(T)(S) = \esp{\frac{\crochet{V(\theta \omega), S(\omega) U(\omega)}}{\crochet{V(\theta \omega), T(\omega) U(\omega)}}}.
\end{equation}
We will apply this result to $T(\omega) = \Phi(z, 1, \omega)$. By the cocycle property, keeping Ruelle's notations as much as possible,
\begin{align*}
  T^n_\omega &= T(\theta^{n-1} \omega) \cdots T(\theta \omega) T(\omega) \\
  &= \Phi(z, 1, \theta^{n-1} \omega) \cdots \Phi(z, 1, \theta \omega) \Phi(z, 1, \omega) \\
  &= \Phi(z, n, \theta^n \omega).
\end{align*}
Consequently, the leading Lyapunov exponent is the almost sure limit
\begin{equation*}
  \Xrond(T) = \lim_{n \to +\infty} \frac{1}{n} \log \norme{T^n_\omega} = \lim_{n \to +\infty} \frac{1}{n} \norme{\Phi(z, 1, \theta^n \omega)} = \Lambda(z).
\end{equation*}
It remains for us to identify the term $S$ corresponding to the consequence on $\phi(z, 1, \omega)$ of an infinitesimal variation of $A(z, \omega_t)$, and then to compute the expectation giving the derivative.

First, note that if $\phi_M$ is the fundamental solution associated with the continuous matrix function $t \to M(t)$:
\begin{equation*}
  \frac{d \phi_M(t)}{dt} = M(t) \phi_M(t), \quad \phi_M(0) = I_d,
\end{equation*}
then we have the integration by parts formula
\begin{equation*}
  \phi_{M+N}(t) = \phi_M(t) + \int_0^t \phi_M(t-s) N(s) \phi_{M+N}(s) \, ds.
\end{equation*}

We deduce that the derivative of the function $z \to \Phi(z, t, \omega)$ is
\begin{equation*}
  \partial_z \Phi(z, t, \omega) = \int_0^t \Phi(z, t-s, \theta_s \omega) \partial_z A(z, \omega_s) \Phi(z, s, \omega) \, ds.
\end{equation*}
Consequently, the function $z \to \Lambda(z)$ is differentiable with derivative
\begin{equation*}
  \Lambda'(z) = \esp{\frac{\crochet{V_z(\theta \omega), \partial_z \Phi(1, z, \omega) U_z(\omega)}}{\crochet{V_z(\theta \omega), \Phi(z, 1, \omega) U_z(\omega)}}}.
\end{equation*}

To conclude our proof, it remains to show that this expression coincides with $\esp{\crochet{V_z(\omega), A(z, \omega_0) U_z(\omega)}}$. Since this is a calculation at fixed $z$, we will sometimes omit the variable $z$ to simplify the notations at the end of this demonstration.

Observe that
\begin{equation*}
  \crochet{V(\theta \omega), \Phi(1, \omega) U(\omega)} = \crochet{V(\theta \omega), q(1, \omega) U(\theta \omega)} = q(1, \omega) \crochet{V(\theta \omega), U(\theta \omega)} = q(1, \omega).
\end{equation*}
Then,
\begin{align*}
  \crochet{V_z(\theta \omega), \partial_z \Phi(1, z, \omega) U_z(\omega)} &= \int_0^1 \crochet{V_z(\theta \omega), \Phi(z, 1-s, \theta_s \omega) \partial_z A(z, \omega_s) \Phi(z, s, \omega) U_z(\omega)} \, ds \\
  &= \int_0^1 q(s, \omega) \crochet{V_z(\theta \omega), \Phi(z, 1-s, \theta_s \omega) \partial_z A(z, \omega_s) U_z(\theta_s \omega)} \, ds \\
  &= \int_0^1 q(s, \omega) \crochet{\Phi^*(z, 1-s, \theta_s \omega) V_z(\theta \omega), \partial_z A(z, \omega_s) U_z(\theta_s \omega)} \, ds \\
  &= \int_0^1 q(s, \omega) q(1-s, \theta_s \omega) \crochet{V_z(\theta_s \omega), \partial_z A(z, \omega_s) U_z(\theta_s \omega)} \, ds \\
  &= q(1, \omega) \int_0^1 \crochet{V_z(\theta_s \omega), \partial_z
    A(z, \omega_s) U_z(\theta_s \omega)} \, ds, 
\end{align*}
since by the cocycle property, $q(1, \omega) = q(s, \omega) q(1-s, \theta_s \omega)$. Consequently, by stationarity,
\begin{align*}
  \Lambda'(z) &= \esp{\int_0^1 \crochet{V_z(\theta_s \omega), \partial_z A(z, \omega_s) U_z(\theta_s \omega)} \, ds} \\
  &= \int_0^1 \esp{\crochet{V_z(\theta_s \omega), \partial_z A(z, \omega_s) U_z(\theta_s \omega)}} \, ds \\
  &= \esp{\crochet{V_z(\omega), A(z, \omega_0) U_z(\omega)}}.
\end{align*}
}
\bigskip
\thanks{{\it Acknowledgments.} The author thanks the Centre Henri Lebesgue ANR-11-LABX-0020-01 for its support during the conception of this work.}

\bibliographystyle{plainnat}
\bibliography{mathbio,nbio}

\end{document}